\DeclarePairedDelimiterX{\Hproduct}[2]{\langle}{\rangle}{#1, #2}
\DeclarePairedDelimiterX{\Lproduct}[2]{\lparen}{\rparen}{#1, #2}
\newcommand{\ball}{B}
\newcommand{\Hermform}{\begin{pmatrix} - 1 & 0 \\ 0 & I_{n + 1} \end{pmatrix}}
	\def\MR#1{}
\title[Kohn-Rossi cohomology]{Kohn-Rossi cohomology of spherical CR manifolds}
\author{Yuya Takeuchi}
\address{Division of Mathematics \\ Institute of Pure and Applied Sciences \\ University of Tsukuba
	\\ 1-1-1 Tennodai, Tsukuba, Ibaraki 305-8571 Japan}
\email{ytakeuchi@math.tsukuba.ac.jp, yuya.takeuchi.math@gmail.com}
\subjclass[2020]{32V05, 22E40}
\keywords{Kohn-Rossi cohomology, spherical CR manifold, \Weitzenbock formula, bigraded Rumin complex}
\thanks{This work was supported by JSPS KAKENHI Grant Number JP21K13792.}
\begin{document}

\begin{abstract}
	We prove some vanishing theorems for the Kohn-Rossi cohomology of some spherical CR manifolds.
	To this end,
	we use a canonical contact form defined via the Patterson-Sullivan measure
	and \Weitzenbock-type formulae for the Kohn Laplacian.
	We also see that
	our results are optimal in some cases.
\end{abstract}

\maketitle

\section{Introduction}
\label{section:introduction}

It is one of the most important problems in conformal geometry
to find a good representative in a conformal class;
the Yamabe problem for example.
A conformal manifold is said to be locally conformally flat
if it is locally isomorphic to the unit sphere as a conformal manifold.
Such a manifold typically arises as the quotient of a domain in the sphere
by a Kleinian group;
this is known as a Kleinian manifold.
Nayatani~\cite{Nayatani1997} has constructed a canonical Riemannian metric on a Kleinian manifold
by using the Patterson-Sullivan measure on the limit set of a Kleinian group.
He has also computed the curvature of this metric
and applied it to study the de Rham cohomology of a compact Kleinian manifold;
see also \cite{Izeki2002}.

A CR counterpart of a locally conformally flat manifold
is a \emph{spherical CR manifold};
that is,
a CR manifold locally isomorphic to $(S^{2 n + 1}, T^{1, 0} S^{2 n + 1})$.
Such a CR manifold typically arises as follows.
Let $\Gamma$ be a torsion-free discrete subgroup of $PU(n + 1, 1)$,
the automorphism group of $(S^{2 n + 1}, T^{1, 0} S^{2 n + 1})$.
Assume that the \emph{limit set} $\Lambda_{\Gamma}$ of $\Gamma$
is strictly contained in $S^{2 n + 1}$.
Then $\Gamma$ acts on $\Omega_{\Gamma} \coloneqq S^{2 n + 1} \setminus \Lambda_{\Gamma}$ properly discontinuously,
and the quotient $M_{\Gamma} \coloneqq \Omega_{\Gamma} / \Gamma$ is a spherical CR manifold.

Nayatani~\cite{Nayatani1999}, Yue~\cite{Yue1999}, and Wang~\cite{Wang2003}
have independently constructed a canonical contact form on $M_{\Gamma}$
by using the Patterson-Sullivan measure on $\Lambda_{\Gamma}$.
Moreover,
Nayatani has also computed the Tanaka-Webster Ricci curvature of this contact form
in terms of the \emph{critical exponent} $\delta_{\Gamma}$ of $\Gamma$
and a non-negative $(1, 1)$-tensor;
see \cref{eq:formula-of-Ricci-curvature}.

The aim of this paper is to study the Kohn-Rossi cohomology $H^{p, q}_{\KR}(M_{\Gamma})$ of $M_{\Gamma}$,
which is a CR analog of the Dolbeault cohomology of a complex manifold,
by using this contact form.
To this end,
we apply \Weitzenbock-type formulae for the Kohn Laplacian obtained by \cite{Case2021-Rumin-preprint}.

We first consider $H^{0, q}_{\KR}(M_{\Gamma})$ and $H^{n + 1, q}_{\KR}(M_{\Gamma})$;
in this case,
we can simplify the condition for $\Gamma$.

\begin{theorem}
\label{thm:vanishing-of-KR-cohomology-1}
	Let $\Gamma$ be a non-elementary torsion-free discrete subgroup of $PU(n + 1, 1)$
	such that $M_{\Gamma}$ is compact and $\delta_{\Gamma} < n$.
	If $q$ is an integer satisfying $(n + 2) \delta_{\Gamma} / (2 n + 2 - \delta_{\Gamma}) < q \leq n - 1$,
	then $H^{0, q}_{\KR}(M_{\Gamma}) = H^{n + 1, n - q}_{\KR}(M_{\Gamma}) = 0$.
\end{theorem}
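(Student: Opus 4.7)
The plan is to exploit the canonical contact form $\theta_{N}$ on $M_{\Gamma}$ constructed by Nayatani, Yue, and Wang from the Patterson-Sullivan measure, whose Tanaka-Webster Ricci tensor decomposes as a coefficient (depending linearly on $\delta_{\Gamma}$) times the Levi form plus a pointwise non-negative Hermitian $(1, 1)$-tensor; this is recorded in \cref{eq:formula-of-Ricci-curvature}. The idea is to feed this decomposition into a \Weitzenbock-type identity to force every harmonic $(0, q)$-form to vanish when $q$ lies in the prescribed range.

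First I would apply the \Weitzenbock identity for the Kohn Laplacian on $(0, q)$-forms from \cite{Case2021-Rumin-preprint}. Formulated in the bigraded Rumin calculus, it expresses $\Hproduct{\Box_{b} \varphi}{\varphi}$ as the sum of a non-negative second-order Bochner-type term and a pointwise zeroth-order curvature endomorphism $\mathcal{R}_{q}$ that depends linearly on the Tanaka-Webster Ricci tensor. Substituting Nayatani's formula, $\mathcal{R}_{q}(\varphi)$ breaks up as a non-negative contribution coming from the Patterson-Sullivan tensor plus a scalar multiple $\lambda(n, q, \delta_{\Gamma}) \, \|\varphi\|^{2}$.

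The crux of the argument is the elementary but delicate verification that
\begin{equation*}
	\lambda(n, q, \delta_{\Gamma}) > 0
	\iff
	q > \frac{(n + 2) \delta_{\Gamma}}{2 n + 2 - \delta_{\Gamma}},
\end{equation*}
which matches the hypothesis on $q$ exactly. Integrating $\Hproduct{\Box_{b} \varphi}{\varphi}$ against Nayatani's volume form $\theta_{N} \wedge (d \theta_{N})^{n}$ then forces any $\Box_{b}$-harmonic $(0, q)$-form to vanish. Since $M_{\Gamma}$ is compact strictly pseudoconvex and $1 \leq q \leq n - 1$, Kohn's Hodge theorem identifies $H^{0, q}_{\KR}(M_{\Gamma})$ with $\ker \Box_{b}^{0, q}$, giving the first vanishing. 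The assertion for $H^{n + 1, n - q}_{\KR}(M_{\Gamma})$ then follows from the conjugate-linear Hodge-star duality $H^{p, q}_{\KR}(M_{\Gamma}) \cong \overline{H^{n + 1 - p, n - q}_{\KR}(M_{\Gamma})}$ available on a compact strictly pseudoconvex CR manifold.

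I expect the principal obstacle to be the algebraic step isolating the sharp threshold: one must carefully track the coefficients appearing in the bigraded formulation of Case's \Weitzenbock formula (hence the explicit mention of the bigraded Rumin complex in the keywords) and combine them with the precise constants in Nayatani's Ricci expression, then simplify the resulting rational inequality to recover $(n + 2) \delta_{\Gamma} / (2 n + 2 - \delta_{\Gamma})$. Everything else---the non-negativity of the Patterson-Sullivan tensor, Kohn's Hodge theory in the range $1 \leq q \leq n - 1$, and CR Serre duality---is standard once this positivity has been established.
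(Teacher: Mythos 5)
Your proposal follows essentially the same route as the paper: Case's \Weitzenbock formula for the Kohn Laplacian on $(0, q)$-forms (where the $\del_{b}^{\ast}$, full-curvature, and $\Ric \sharp$ terms drop out since $p = 0$), Nayatani's formula \cref{eq:formula-of-Ricci-curvature} for the Ricci curvature of $\theta_{\Gamma}$, non-negativity of $\tensor{D}{_{\alpha}_{\ovxb}}$, the threshold $q > (n + 2) \delta_{\Gamma} / (2 n + 2 - \delta_{\Gamma})$, Hodge theory for the Kohn-Rossi cohomology, and Serre duality. The only point to make explicit is that the zeroth-order term you isolate is $\lambda \, \tensor{D}{_{\gamma}^{\gamma}} \, \Hproduct{\varphi}{\varphi}$ rather than $\lambda \Hproduct{\varphi}{\varphi}$, so concluding $\varphi = 0$ additionally uses \cref{lem:trace-of-D} (positivity of $\tensor{D}{_{\gamma}^{\gamma}}$ on an open dense subset, which is where the non-elementarity of $\Gamma$ enters) together with continuity.
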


In addition,
we will see that
the condition for $q$ in the above theorem is optimal (\cref{prop:optimal-degree-condition}).

We next investigate the Kohn-Rossi cohomology for a general bidegree.
For each $(p, q) \in \bbZ_{\geq 0} \times \bbZ_{> 0}$ with $p + q \leq n - 1$,
we set
\begin{equation}
	m_{p, q}
	\coloneqq
	\begin{cases}
		\frac{2 (n + 1) q - 2 p}{n - p + q + 2} & (2 q \leq n + 2), \\
		\frac{(2 (n + 1) q - 2 p)(n - q + 1)}{(n - p + q) (n - q + 1) + n} & (2 q \geq n + 2).
	\end{cases}
\end{equation}
Note that $0 < m_{p, q} < n$.

\begin{theorem}
\label{thm:vanishing-of-KR-cohomology-2}
	Let $\Gamma$ be a non-elementary torsion-free discrete subgroup of $PU(n + 1, 1)$
	such that $M_{\Gamma}$ is compact.
	If $\delta_{\Gamma} < m_{p, q}$,
	then $H^{p, q}_{\KR}(M_{\Gamma}) = H^{n + 1 - p, n - q}_{\KR}(M_{\Gamma}) = 0$.
\end{theorem}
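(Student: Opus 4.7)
My plan is to follow the same template as in the proof of \cref{thm:vanishing-of-KR-cohomology-1}, now working with the bigraded Rumin complex rather than just the boundary pieces where $p = 0$ or $p = n + 1$. Since $M_\Gamma$ is compact, each class in $H^{p, q}_{\KR}(M_\Gamma)$ is represented by a harmonic form, so it suffices to prove a \Weitzenbock-type identity that forces harmonic $(p, q)$-forms to vanish whenever $\delta_\Gamma < m_{p, q}$. The vanishing of $H^{n + 1 - p, n - q}_{\KR}(M_\Gamma)$ will then follow by CR Serre duality.

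The analytic input should be the family of \Weitzenbock-type formulae of~\cite{Case2021-Rumin-preprint} for the Kohn Laplacian acting on the $(p, q)$-piece of the bigraded Rumin complex. Each such formula yields a lower bound
\begin{equation*}
\|\bar\partial_b \phi\|^2 + \|\bar\partial_b^\ast \phi\|^2 \geq \int_{M_\Gamma} \langle \mathcal{R}_{p, q} \phi, \phi \rangle \, \theta \wedge (d\theta)^n,
\end{equation*}
where $\mathcal{R}_{p, q}$ depends linearly on the Tanaka-Webster Ricci tensor with coefficients determined by $(p, q, n)$. Substituting the Nayatani formula~\cref{eq:formula-of-Ricci-curvature}, which expresses the Ricci curvature as a linear combination of the Levi form (with coefficient involving $n - \delta_\Gamma$) and a non-negative Hermitian tensor (with coefficient involving $\delta_\Gamma$), yields a pointwise estimate of the shape $\langle \mathcal{R}_{p, q} \phi, \phi \rangle \geq c_{p, q}(\delta_\Gamma) |\phi|^2 + (\text{non-negative terms})$ for some explicit linear function $c_{p, q}$. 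The condition $c_{p, q}(\delta_\Gamma) > 0$ should reduce exactly to $\delta_\Gamma < m_{p, q}$. The two cases in the definition of $m_{p, q}$ presumably correspond to two distinct \Weitzenbock-type identities, one sharp in the regime $2q \leq n + 2$ and the other in the regime $2q \geq n + 2$; taking the stronger of the two in each range yields the piecewise formula.

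The main obstacle I anticipate is the curvature computation: for arbitrary $(p, q)$ with $p + q \leq n - 1$ one has to track the coefficients of several independent bilinear terms in the Ricci tensor under the bigraded Rumin decomposition of $\phi$ and then optimize the resulting constrained linear estimate, so that neither candidate \Weitzenbock identity produces a threshold larger than $m_{p, q}$. Conceptually the argument is parallel to the proof of \cref{thm:vanishing-of-KR-cohomology-1}; the real work lies in the algebraic bookkeeping needed to identify the optimal combination and match the bound against the piecewise-defined $m_{p, q}$.
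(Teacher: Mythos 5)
Your overall template (harmonic representatives, a Weitzenb\"ock-type formula for $\KLap$, Nayatani's curvature computation, Serre duality) is indeed the paper's strategy, but as written your plan has a genuine gap: you propose to control the curvature term by substituting only the Ricci formula \cref{eq:formula-of-Ricci-curvature}, treating the zeroth-order operator in the Weitzenb\"ock identity as ``linear in the Tanaka--Webster Ricci tensor.'' For general $(p,q)$ with $p, q \geq 1$ this is false: the identity in \cref{prop:Weitzenbock-type-formula} contains the term $R \, \sharp \, \overline{\sharp} \, \omega$, which involves the \emph{full} Tanaka--Webster curvature, not just its trace. The step you are missing is that $M_{\Gamma}$ is spherical, so the Chern tensor vanishes and the full curvature is determined by the CR Schouten tensor; combined with Nayatani's formula this gives \cref{eq:formula-of-curvature}, expressing $\tensor{R}{_{\alpha}_{\ovxb}_{\rho}_{\ovxs}}$ entirely in terms of $\tensor{D}{_{\alpha}_{\ovxb}}$, $\tensor{D}{_{\gamma}^{\gamma}}$ and the Levi form. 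Only after this substitution (and after discarding the $D \, \sharp \, \overline{\sharp}$ term, which pairs to zero against primitive forms) does one obtain a pointwise estimate of the shape you want. Without invoking sphericity in this way, no bound of the form $c_{p,q}(\delta_{\Gamma})\,\tensor{D}{_{\gamma}^{\gamma}}\Hproduct{\omega}{\omega}$ is available, so the claimed reduction to $\delta_{\Gamma} < m_{p,q}$ cannot be carried out.

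Your explanation of the two cases in $m_{p,q}$ is also not how the argument works, and the alternative you suggest would fail. The paper uses a single Weitzenb\"ock identity, namely the second one in \cref{prop:Weitzenbock-type-formula} (the one with $+\tfrac{1}{n-p-q+2}(\del_{b}^{\ast}\del_{b} + \delbb^{\ast}\delbb)$), for all $(p,q)$; the first identity contains $-\tfrac{1}{n-p-q+1}\,\del_{b}\del_{b}^{\ast}$, which has the wrong sign once $p \geq 1$, so a dichotomy ``one identity per regime'' is not viable. The split at $2q = n + 2$ instead comes from the estimation stage: after substituting \cref{eq:formula-of-Ricci-curvature,eq:formula-of-curvature}, the term $(2q - n - 2)\,\delta_{\Gamma}\Hproduct{D \, \sharp \, \omega}{\omega}$ appears, and since $-\tensor{D}{_{\gamma}^{\gamma}}\Hproduct{\omega}{\omega} \leq \Hproduct{D \, \sharp \, \omega}{\omega} \leq 0$ by \cref{eq:estimate-of-D}, one drops it when $2q \leq n + 2$ and absorbs it into the $\tensor{D}{_{\gamma}^{\gamma}}\Hproduct{\omega}{\omega}$ coefficient when $2q \geq n + 2$; the two resulting positivity conditions are exactly the two branches of $m_{p,q}$. (One also needs the elementary inequality $(n+2)(n-q+1) - np > 0$ to handle the $D \, \overline{\sharp}$ term, and \cref{lem:trace-of-D} to pass from positivity of $\tensor{D}{_{\gamma}^{\gamma}}$ on an open dense set to $\omega = 0$.) So the skeleton of your proposal is right, but the decisive inputs—sphericity via \cref{eq:formula-of-curvature}, the choice of the correct identity, and the sign analysis producing the case split—are precisely the points left unresolved.
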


We finally give some remarks on \cref{thm:vanishing-of-KR-cohomology-1,thm:vanishing-of-KR-cohomology-2}.
We will show that $H^{p, q}_{\KR}(M_{\Gamma})$ vanishes even for $p + q = n, n + 1$ 
when $\Gamma$ is convex cocompact and $\delta_{\Gamma} < 2$
(\cref{prop:vanishing-KR-cohomology-with-small-delta}).
We will also give an example of $\Gamma$
such that $\delta_{\Gamma} = n$ and $H^{p, q}_{\KR}(M_{\Gamma}) = 0$ except $q = 0, n$
(\cref{prop:vanishing-KR-cohomology-with-n}).

This paper is organized as follows.
In \cref{section:CR-geometry},
we recall basic facts on CR manifolds.
\cref{section:KR-cohomology-and-Hodge-theory} contains a brief summary of
the Kohn-Rossi cohomology and \Weitzenbock-type formulae given by Case.
In \cref{section:complex-hyperbolic-geometry},
we summarize without proofs the relevant material on the complex hyperbolic geometry
and the Patterson-Sullivan measure.
In \cref{section:canoncal-contact-form},
we give the construction of a canonical contact form on $M_{\Gamma}$.
\cref{section:proof-of-main-theorems} is devoted to the proofs of the main theorems.
In \cref{section:concluding-remarks},
we add some remarks on the Kohn-Rossi cohomology of $M_{\Gamma}$.

\medskip

\noindent
\emph{Notation.}
We use Einstein's summation convention and assume that
lowercase Greek indices $\alpha, \beta, \gamma, \dots$ run from $1, \dots, n$.

\medskip

\section{CR geometry}
\label{section:CR-geometry}

\subsection{CR structures}
\label{subsection:CR-structures}

Let $M$ be a smooth $(2 n + 1)$-dimensional manifold without boundary.
A \emph{CR structure} is a rank $n$ complex subbundle $T^{1, 0} M$
of the complexified tangent bundle $T M \otimes \bbC$ such that
\begin{equation}
	T^{1, 0} M \cap T^{0, 1} M = 0, \qquad
	[\Gamma(T^{1, 0} M), \Gamma(T^{1, 0} M)] \subset \Gamma(T^{1, 0} M),
\end{equation}
where $T^{0, 1} M$ is the complex conjugate of $T^{1, 0} M$ in $T M \otimes \bbC$.
A typical example of CR manifolds is a real hypersurface $M$ in an $(n + 1)$-dimensional complex manifold $X$;
this $M$ has the canonical CR structure
\begin{equation}
	T^{1, 0} M
	\coloneqq T^{1, 0} X |_{M} \cap (T M \otimes \bbC).
\end{equation}
In particular,
the unit sphere $S^{2 n + 1}$ in $\bbC^{n + 1}$ has the canonical CR structure $T^{1, 0} S^{2 n + 1}$.
A CR manifold $(M, T^{1, 0} M)$ is said to be \emph{spherical}
if it is locally isomorphic to $(S^{2 n + 1}, T^{1, 0} S^{2 n + 1})$.

A CR structure $T^{1, 0} M$ is said to be \emph{strictly pseudoconvex}
if there exists a nowhere-vanishing real one-form $\theta$ on $M$
such that
$\theta$ annihilates $T^{1, 0} M$ and
\begin{equation}
	- \sqrt{-1} d \theta (Z, \overline{Z}) > 0, \qquad
	0 \neq Z \in T^{1, 0} M.
\end{equation}
We call such a one-form a \emph{contact form}.
The triple $(M, T^{1, 0} M, \theta)$ is called a \emph{pseudo-Hermitian manifold}.
Denote by $T$ the \emph{Reeb vector field} with respect to $\theta$; 
that is,
the unique vector field satisfying
\begin{equation}
	\theta(T) = 1,
	\qquad T \contr d\theta = 0.
\end{equation}
Let $(Z_{\alpha})$ be a local frame of $T^{1, 0} M$,
and set $Z_{\ovxa} \coloneqq \overline{Z_{\alpha}}$.
Then
$(T, Z_{\alpha}, Z_{\ovxa})$ gives a local frame of $T M \otimes \bbC$,
called an \emph{admissible frame}.
Its dual frame $(\theta, \theta^{\alpha}, \theta^{\ovxa})$
is called an \emph{admissible coframe}.
The two-form $d \theta$ is written as
\begin{equation}
	d \theta
	= \sqrt{- 1} \tensor{l}{_{\alpha}_{\ovxb}} \theta^{\alpha} \wedge \theta^{\ovxb},
\end{equation}
where $(\tensor{l}{_{\alpha}_{\ovxb}})$ is a positive definite Hermitian matrix.
We use $\tensor{l}{_{\alpha}_{\ovxb}}$ and its inverse $\tensor{l}{^{\alpha} ^{\ovxb}}$
to raise and lower indices of tensors.

A CR manifold $(M, T^{1, 0} M)$ is said to be \emph{embeddable}
if there exists a smooth embedding of $M$ to some $\bbC^{N}$
such that $T^{1, 0} M = T^{1, 0} \bbC^{N}|_{M} \cap (T M \otimes \bbC)$.
It is known that any closed connected strictly pseudoconvex CR manifold of dimension at least five
is embeddable~\cite{Boutet_de_Monvel1975}.

\subsection{Tanaka-Webster connection}
\label{subsection:TW-connection}

A contact form $\theta$ induces a canonical connection $\nabla$,
called the \emph{Tanaka-Webster connection} with respect to $\theta$.
It is defined by
\begin{equation}
	\nabla T
	= 0,
	\quad
	\nabla Z_{\alpha}
	= \tensor{\omega}{_{\alpha}^{\beta}} Z_{\alpha},
	\quad
	\nabla Z_{\ovxa}
	= \tensor{\omega}{_{\ovxa}^{\ovxb}} Z_{\ovxb}
	\quad
	\rbra*{ \tensor{\omega}{_{\ovxa}^{\ovxb}}
	= \overline{\tensor{\omega}{_{\alpha}^{\beta}}} }
\end{equation}
with the following structure equations:
\begin{equation}
	d \theta^{\beta}
	= \theta^{\alpha} \wedge \tensor{\omega}{_{\alpha}^{\beta}}
	+ \tensor{A}{^{\beta}_{\ovxa}} \theta \wedge \theta^{\ovxa},
	\qquad
	d \tensor{l}{_{\alpha}_{\ovxb}}
	= \tensor{\omega}{_{\alpha}^{\gamma}} \tensor{l}{_{\gamma}_{\ovxb}}
	+ \tensor{l}{_{\alpha}_{\ovxg}} \tensor{\omega}{_{\ovxb}^{\ovxg}}.
\end{equation}
The tensor $\tensor{A}{_{\alpha}_{\beta}} = \overline{\tensor{A}{_{\ovxa}_{\ovxb}}}$
is symmetric and is called the \emph{Tanaka-Webster torsion}.

The curvature form
$\tensor{\Omega}{_{\alpha}^{\beta}} \coloneqq d \tensor{\omega}{_{\alpha}^{\beta}}
- \tensor{\omega}{_{\alpha}^{\gamma}} \wedge \tensor{\omega}{_{\gamma}^{\beta}}$
of the Tanaka-Webster connection satisfies
\begin{equation}
\label{eq:curvature-form-of-TW-connection}
	\tensor{\Omega}{_{\alpha}^{\beta}}
	= \tensor{R}{_{\alpha}^{\beta}_{\rho}_{\ovxs}} \theta^{\rho} \wedge \theta^{\ovxs}
	\qquad \text{modulo $\theta, \theta^{\rho} \wedge \theta^{\sigma}, \theta^{\ovxr} \wedge \theta^{\ovxs}$}.
\end{equation}
We call the tensor $\tensor{R}{_{\alpha}^{\beta}_{\rho}_{\ovxs}}$
the \emph{Tanaka-Webster curvature}.
This tensor has the symmetry 
\begin{equation}
	\tensor{R}{_{\alpha}_{\ovxb}_{\rho}_{\ovxs}}
	= \tensor{R}{_{\rho}_{\ovxb}_{\alpha}_{\ovxs}}
	= \tensor{R}{_{\alpha}_{\ovxs}_{\rho}_{\ovxb}}.
\end{equation}
Contraction of indices gives the \emph{Tanaka-Webster Ricci curvature}
$\tensor{\Ric}{_{\rho}_{\ovxs}} \coloneqq \tensor{R}{_{\alpha}^{\alpha}_{\rho}_{\ovxs}}$
and the \emph{Tanaka-Webster scalar curvature}
$\Scal \coloneqq \tensor{\Ric}{_{\rho}^{\rho}}$.
The \emph{CR Schouten tensor} $\tensor{P}{_{\alpha}_{\ovxb}}$ is defined by
\begin{equation}
	\tensor{P}{_{\alpha}_{\ovxb}}
	\coloneqq \frac{1}{n + 2} \rbra*{\tensor{\Ric}{_{\alpha}_{\ovxb}}
		- \frac{\Scal}{2 (n + 1)} \tensor{l}{_{\alpha}_{\ovxb}}}.
\end{equation}
We define the \emph{Chern tensor} $\tensor{S}{_{\alpha}_{\ovxb}_{\rho}_{\ovxs}}$ by
\begin{equation}
	\tensor{S}{_{\alpha}_{\ovxb}_{\rho}_{\ovxs}}
	\coloneqq \tensor{R}{_{\alpha}_{\ovxb}_{\rho}_{\ovxs}}
		- \tensor{P}{_{\alpha}_{\ovxb}} \tensor{l}{_{\rho}_{\ovxs}}
		- \tensor{P}{_{\alpha}_{\ovxs}} \tensor{l}{_{\rho}_{\ovxb}}
		- \tensor{P}{_{\rho}_{\ovxb}} \tensor{l}{_{\alpha}_{\ovxs}}
		- \tensor{P}{_{\rho}_{\ovxs}} \tensor{l}{_{\alpha}_{\ovxb}},
\end{equation}
which is the trace-free part of $\tensor{R}{_{\alpha}_{\ovxb}_{\rho}_{\ovxs}}$.
It is known that $(M, T^{1, 0} M)$ is a spherical CR manifold
if and only if the Chern tensor vanishes identically when $n \geq 2$~\cite{Chern-Moser1974}.

We use the square bracket to denote antisymmetrization of indices;
for example,
\begin{equation}
	\tensor{\tau}{_{[}_{\alpha_{1}}_{\alpha_{2}}_{\ovxb_{1}}_{\ovxb_{2}}_{]}}
	= \frac{1}{2 ! 2 !}
		\rbra*{\tensor{\tau}{_{\alpha_{1}}_{\alpha_{2}}_{\ovxb_{1}}_{\ovxb_{2}}}
		- \tensor{\tau}{_{\alpha_{2}}_{\alpha_{1}}_{\ovxb_{1}}_{\ovxb_{2}}}
		- \tensor{\tau}{_{\alpha_{1}}_{\alpha_{2}}_{\ovxb_{2}}_{\ovxb_{1}}}
		+ \tensor{\tau}{_{\alpha_{2}}_{\alpha_{1}}_{\ovxb_{2}}_{\ovxb_{1}}}}.
\end{equation}
As can be seen from the above equation,
we only antisymmetrize over indices of the same type.
Moreover,
we fix contracted indices under antisymmetrization;
for example,
\begin{equation}
	\tensor{\tau}{_{[}_{\alpha_{1}}_{\beta}_{\alpha_{2}}_{]}^{\beta}}
	= \frac{1}{2 !}
		\rbra*{\tensor{\tau}{_{\alpha_{1}}_{\beta}_{\alpha_{2}}^{\beta}}
		- \tensor{\tau}{_{\alpha_{2}}_{\beta}_{\alpha_{1}}^{\beta}}}.
\end{equation}

\section{Kohn-Rossi cohomology and Hodge theory}
\label{section:KR-cohomology-and-Hodge-theory}

\subsection{Kohn-Rossi cohomology}
\label{subsection:KR-cohomology}

Let $(M, T^{1, 0} M)$ be a CR manifold.
We will denote by $\Omega^{k}_{\bbC}(M)$
the space of $\bbC$-valued $k$-forms on $M$.
Define
\begin{equation}
	F^{p} \Omega^{k}_{\bbC}(M)
	\coloneqq \Set{\omega \in \Omega^{k}_{\bbC}(M) |
		\omega(\ovZ_{1}, \dots , \ovZ_{k + 1 - p} , \cdot , \dots , \cdot) = 0,
		Z_{1}, \dots , Z_{k + 1 - p} \in T^{1, 0} M}.
\end{equation}
Note that
\begin{equation}
	\Omega^{k}_{\bbC}(M) = F^{0} \Omega^{k}_{\bbC}(M)
		\supset F^{1} \Omega^{k}_{\bbC}(M) \supset \cdots \supset F^{k} \Omega^{k}_{\bbC}(M)
		\supset F^{k + 1} \Omega^{k}_{\bbC}(M) = 0.
\end{equation}
Set
\begin{equation}
	C^{p, q}(M)
	\coloneqq F^{p} \Omega^{p + q}_{\bbC}(M) / F^{p + 1} \Omega^{p + q}_{\bbC}(M).
\end{equation}
The integrability of $T^{1, 0} M$ implies that
$d (F^{p} \Omega^{k}_{\bbC} (M)) \subset F^{p} \Omega^{k + 1}_{\bbC}(M)$.
This induces the operator
\begin{equation}
	\delbb \colon C^{p, q}(M) \to C^{p, q + 1}(M); \qquad [\omega] \mapsto [d \omega],
\end{equation}
which satisfies $\delbb^{2} = 0$.
The \emph{Kohn-Rossi cohomology} $H^{p, q}_{\KR}(M)$ of bidegree $(p, q)$ is defined by
\begin{equation}
	H^{p, q}_{\KR}(M)
	\coloneqq \frac{\Ker (\delbb \colon C^{p, q}(M) \to C^{p, q + 1}(M))}
		{\Im (\delbb \colon C^{p, q - 1}(M) \to C^{p, q}(M))}.
\end{equation}
Note that this definition has been introduced by Tanaka~\cite{Tanaka1975}*{Chapter 1.4};
see~\cite{Kohn-Rossi1965}*{Section 6} for the original definition.
If $(M, T^{1, 0} M)$ is a closed embeddable strictly pseudoconvex CR manifold,
the Kohn-Rossi cohomology satisfies the Serre duality
$H^{p, q}_{\KR}(M) \cong H^{n + 1 - p, n - q}_{\KR}(M)$~\cite{Tanaka1975}*{Theorem 7.3}.
Moreover,
$H^{p, q}_{\KR}(M)$ is finite-dimensional for $1 \leq q \leq n - 1$~\cite{Tanaka1975}*{Chapter 7.2}.

\subsection{Hodge theory}
\label{subsection:Hodge-theory}

In this subsection,
we realize the Kohn-Rossi cohomology as the cohomology of a complex of differential forms,
which is a part of the \emph{bigraded Rumin complex}%
~\cites{Rumin1994,Garfield-Lee1998,Garfield2001,Case2021-Rumin-preprint}.
Moreover,
we give the Hodge theory and \Weitzenbock-type formulae of this complex,
which plays a crucial role in the proofs of our main results.
We follow the idea of~\cite{Case2021-Rumin-preprint};
see this memoir for a thorough treatment.

Let $(M, T^{1, 0} M, \theta)$ be a pseudo-Hermitian manifold of dimension $2 n + 1$.
We denote by $\expower^{p, q}(M)$ the vector bundle
\begin{equation}
	\expower^{p, q}(M)
	= \expower^{p} (T^{1, 0} M)^{\ast} \otimes \expower^{q} (T^{0, 1} M)^{\ast}
\end{equation}
on $M$ and denote by $\Omega^{p, q}(M)$ the space of smooth sections of $\expower^{p, q}(M)$.
We call an element of $\Omega^{p, q}(M)$ a \emph{$(p, q)$-form}.
Let $(\theta, \theta^{\alpha}, \theta^{\ovxa	})$ be an admissible coframe.
To simplify notation,
we write
\begin{equation}
	\theta^{A}
	\coloneqq \theta^{\alpha_{1}} \wedge \dots \wedge \theta^{\alpha_{p}},
	\qquad
	\theta^{\ovB}
	\coloneqq \theta^{\ovxb_{1}} \wedge \dots \wedge \theta^{\ovxb_{q}},
\end{equation}
where $A = (\alpha_{1}, \dots , \alpha_{p})$ and $B = (\beta_{1}, \dots , \beta_{q})$
are multi-indices of length $p$ and $q$ respectively.
Let $A^{\prime}$ and $B^{\prime}$ be multi-indices of length $p - 1$ and $q - 1$ respectively.
We identify $A$ with $(\alpha, A^{\prime})$ and $B$ with $(\beta, B^{\prime})$
when no confusion can arise.
Any $\omega \in \Omega^{p, q} (M)$ is written as
\begin{equation}
	\omega
	= \frac{1}{p! q!} \tensor{\omega}{_{A}_{\ovB}} \theta^{A} \wedge \theta^{\ovB}
\end{equation}
with $\tensor{\omega}{_{[}_{A}_{\ovB}_{]}} = \tensor{\omega}{_{A}_{\ovB}}$.
A $(p, q)$-form $\omega$ is said to be \emph{primitive}
if $\tensor{\omega}{_{\mu}_{A^{\prime}}^{\mu}_{\ovB^{\prime}}} = 0$.
The space of primitive $(p, q)$-forms will be denoted by $P^{p, q}(M)$.

The Tanaka-Webster curvature $\tensor{R}{_{\alpha}_{\ovxb}_{\rho}_{\ovxs}}$
and the Tanaka-Webster Ricci curvature $\tensor{\Ric}{_{\alpha}_{\ovxb}}$
act on $\Omega^{p, q}(M)$ as follows:
\begin{gather}
	R \, \sharp \, \overline{\sharp} \, \omega
	\coloneqq \frac{p q}{p! q!} \tensor{R}{_{[}_{\alpha}_{\ovxb}^{\ovxn}^{\mu}}
		\tensor{\omega}{_{\mu}_{A^{\prime}}_{\ovxn}_{\ovB^{\prime}}_{]}} \theta^{A} \wedge \theta^{\ovB}, \\
	\Ric \sharp \, \omega
	\coloneqq  - \frac{p}{p! q!} \tensor{\Ric}{_{[}_{\alpha}^{\mu}}
		\tensor{\omega}{_{\mu}_{A^{\prime}}_{\ovB}_{]}} \theta^{A} \wedge \theta^{\ovB}, \\
	\Ric \overline{\sharp} \, \omega
	\coloneqq - \frac{q}{p! q!} \tensor{\Ric}{^{\ovxn}_{[}_{\ovxb}}
		\tensor{\omega}{_{A}_{\ovxn}_{\ovB^{\prime}}_{]}} \theta^{A} \wedge \theta^{\ovB}.
\end{gather}
These appear in the \Weitzenbock-type formulae used in this paper.

The contact form $\theta$ induces the pointwise Hermitian inner product
\begin{equation}
	\Hproduct{\omega}{\tau}
	\coloneqq \frac{1}{p! q!} \tensor{\omega}{_{A}_{\ovB}} \tensor{\ovxt}{^{\ovB}^{A}},
\end{equation}
where $\omega = (p! q!)^{- 1} \tensor{\omega}{_{A}_{\ovB}} \theta^{A} \wedge \theta^{\ovB}$,
$\tau = (p! q!)^{- 1} \tensor{\tau}{_{A}_{\ovB}} \theta^{A} \wedge \theta^{\ovB}$,
and $\tensor{\ovxt}{_{B}_{\ovA}} \coloneqq \overline{\tensor{\tau}{_{A}_{\ovB}}}$.
The integral of this inner product gives the $L^{2}$-inner product
\begin{equation}
	\Lproduct{\omega}{\tau}
	\coloneqq \frac{1}{n!} \int_{M} \Hproduct{\omega}{\tau} \, \theta \wedge (d \theta)^{n}
\end{equation}
if $\omega$ or $\tau$ is compactly supported.

We next introduce some differential operators acting on $\Omega^{p, q}(M)$.
The Tanaka-Webster connection induces the following two differential operators:
\begin{equation}
	\nabla_{b} \colon \Omega^{p, q}(M) \to \Omega^{1, 0}(M) \otimes \Omega^{p, q}(M);
	\quad
	\frac{1}{p! q!} \tensor{\omega}{_{A}_{\ovB}} \theta^{A} \wedge \theta^{\ovB}
	\mapsto \frac{1}{p! q!} \tensor{\nabla}{_{\gamma}} \tensor{\omega}{_{A}_{\ovB}}
		\theta^{\gamma} \otimes \theta^{A} \wedge \theta^{\ovB}
\end{equation}
and
\begin{equation}
	\ovna_{b} \colon \Omega^{p, q}(M) \to \Omega^{0, 1}(M) \otimes \Omega^{p, q}(M);
	\quad
	\frac{1}{p! q!} \tensor{\omega}{_{A}_{\ovB}} \theta^{A} \wedge \theta^{\ovB}
	\mapsto \frac{1}{p! q!} \tensor{\nabla}{_{\ovxg}} \tensor{\omega}{_{A}_{\ovB}}
		\theta^{\ovxg} \otimes \theta^{A} \wedge \theta^{\ovB}.
\end{equation}
Note that $\ovna_{b}$ is the complex conjugate of $\nabla_{b}$.
We need to introduce $\del_{b}$ and $\delbb$ also,
which correspond to differentials appearing in the bigraded Rumin complex.
For $p + q \leq n - 1$,
we define $\del_{b} \colon \Omega^{p, q}(M) \to \Omega^{p + 1, q}(M)$ by
\begin{equation}
	\del_{b} \rbra*{\frac{1}{p! q!} \tensor{\omega}{_{A}_{\ovB}} \theta^{A} \wedge \theta^{\ovB}}
	\coloneqq \frac{1}{p ! q !} \rbra*{\tensor{\nabla}{_{[}_{\alpha}} \tensor{\omega}{_{A}_{\ovB}_{]}}
		- \frac{q}{n - p - q + 1} \tensor{l}{_{[}_{\alpha}_{\ovxb}}
		\tensor{\nabla}{^{\ovxn}} \tensor{\omega}{_{A}_{\ovxn}_{\ovB^{\prime}}_{]}}}
		\theta^{\alpha A} \wedge \theta^{\ovB}
\end{equation}
and $\delbb \colon \Omega^{p, q}(M) \to \Omega^{p, q + 1}(M)$ by
\begin{equation}
	\delbb \rbra*{\frac{1}{p! q!} \tensor{\omega}{_{A}_{\ovB}} \theta^{A} \wedge \theta^{\ovB}}
	\coloneqq \frac{(- 1)^{p}}{p! q!} \rbra*{\tensor{\nabla}{_{[}_{\ovxb}} \tensor{\omega}{_{A}_{\ovB}_{]}}
		- \frac{p}{n - p - q + 1} \tensor{l}{_{[}_{\alpha}_{\ovxb}}
		\tensor{\nabla}{^{\mu}} \tensor{\omega}{_{\mu}_{A^{\prime}}_{\ovB}_{]}}}
		\theta^{A} \wedge \theta^{\ovxb \ovB};
\end{equation}
see~\cite{Case2021-Rumin-preprint}*{Proposition 5.11}.
Note that $\del_{b} \rbra*{P^{p, q}(M)} \subset P^{p + 1, q}(M)$
and $\delbb \rbra*{P^{p, q}(M)} \subset P^{p, q + 1}(M)$.
Moreover,
\begin{equation}
	0 \to P^{p, 0}(M) \stackrel{\delbb}{\longrightarrow} P^{p, 1}(M)
	\stackrel{\delbb}{\longrightarrow} \dotsb 
	\stackrel{\delbb}{\longrightarrow} P^{p, n - p - 1}(M)
	\stackrel{\delbb}{\longrightarrow} P^{p, n - p}(M) \to 0
\end{equation}
is a complex,
and one has
\begin{equation}
	H^{p, q}_{\KR}(M)
	\cong \frac{\Ker (\delbb \colon P^{p, q}(M) \to P^{p, q + 1}(M))}
		{\Im (\delbb \colon P^{p, q - 1}(M) \to P^{p, q}(M))}
\end{equation}
for $p + q \leq n - 1$;
remark that the above complex is a part of a longer complex
that computes all of the Kohn-Rossi cohomology groups~\cite{Case2021-Rumin-preprint}*{Sections 9 and 11}.

The $L^{2}$-inner product induces the formal adjoints
$\del_{b}^{\ast} \colon \Omega^{p, q}(M) \to \Omega^{p - 1, q}(M)$
and $\delbb^{\ast} \colon \Omega^{p, q}(M) \to \Omega^{p, q - 1}(M)$ for $p + q \leq n$.
These operators are written as follows~\cite{Case2021-Rumin-preprint}*{Lemma 10.12}:
\begin{gather}
	\del_{b}^{\ast} \rbra*{\frac{1}{p! q!} \tensor{\omega}{_{A}_{\ovB}} \theta^{A} \wedge \theta^{\ovB}}
	= - \frac{1}{(p - 1)! q!} \tensor{\nabla}{^{\mu}} \tensor{\omega}{_{\mu}_{A^{\prime}}_{\ovB}}
		\theta^{A^{\prime}} \wedge \theta^{\ovB}, \\
	\delbb^{\ast} \rbra*{\frac{1}{p! q!} \tensor{\omega}{_{A}_{\ovB}} \theta^{A} \wedge \theta^{\ovB}}
	= - \frac{(- 1)^{p}}{p! (q - 1)!} \tensor{\nabla}{^{\ovxn}} \tensor{\omega}{_{A}_{\ovxn}_{\ovB^{\prime}}}
		\theta^{A} \wedge \theta^{\ovB^{\prime}}.
\end{gather}
Note that $\del_{b}^{\ast} \rbra*{P^{p, q}(M)} \subset P^{p - 1, q}(M)$
and $\delbb^{\ast} \rbra*{P^{p, q}(M)} \subset P^{p, q - 1}(M)$.

The \emph{Kohn Laplacian} $\KLap$ on $\Omega^{p, q}(M)$ is given by
\begin{equation}
	\KLap
	\coloneqq \frac{n - p - q}{n - p - q + 1} \delbb \delbb^{\ast} + \delbb^{\ast} \delbb
\end{equation}
for $p + q \leq n - 1$.
We follow the definition of the Kohn Laplacian given by \cite{Case2021-Rumin-preprint}*{Definition 13.1},
which differs from other previous works,
\cites{Kohn1965,Folland-Stein1974,Tanaka1975} for example.
A primitive $(p, q)$-form $\omega$ is said to be \emph{$\delbb$-harmonic} if $\KLap \omega = 0$.
We denote by $\calH^{p, q}(M)$ the space of $\delbb$-harmonic $(p, q)$-forms on $M$.
Note that
\begin{equation}
	\calH^{p, q}(M)
	= \Set{\omega \in P^{p, q}(M) | \delbb \omega = \delbb^{\ast} \omega = 0}
\end{equation}
if $M$ is closed.
Similar to the Hodge theory on Riemannian or \Kahler manifolds,
we have the following

\begin{proposition}[\cite{Case2021-Rumin-preprint}*{Corollary 15.6}]
\label{prop:harmonic-representative}
	Let $(M, T^{1, 0} M, \theta)$ be a closed and embeddable pseudo-Hermitian manifold of dimension $2 n + 1$.
	Then there exists a canonical isomorphism $H^{p, q}_{\KR}(M) \cong \calH^{p, q}(M)$ for $p + q \leq n - 1$.
\end{proposition}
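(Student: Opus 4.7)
The plan is to carry out the standard Hodge-theoretic argument, adapted to the bigraded Rumin complex on primitive forms. The input I need is (a) a Hodge decomposition for the modified Kohn Laplacian $\KLap$ on $P^{p,q}(M)$, and (b) the formula $H^{p,q}_{\KR}(M) \cong \Ker \delbb / \Im \delbb$ on primitive forms recalled earlier in the excerpt.

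First I would establish the pointwise identity
\begin{equation}
	\Lproduct{\KLap \omega}{\omega}
	= \frac{n-p-q}{n-p-q+1} \Lproduct{\delbb^{\ast}\omega}{\delbb^{\ast}\omega}
		+ \Lproduct{\delbb \omega}{\delbb \omega}
\end{equation}
for $\omega \in P^{p,q}(M)$ with $p+q \leq n-1$; since the coefficient $(n-p-q)/(n-p-q+1)$ is strictly positive in this range, $\KLap \omega = 0$ forces $\delbb \omega = \delbb^{\ast}\omega = 0$, so $\calH^{p,q}(M) = \Ker \delbb \cap \Ker \delbb^{\ast}$ inside $P^{p,q}(M)$.

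Next I would invoke the analytic core of Case's memoir: on a closed embeddable pseudo-Hermitian manifold, $\KLap$ acting on $P^{p,q}(M)$ with $p+q \leq n-1$ is subelliptic, so that $\calH^{p,q}(M)$ is finite-dimensional, $\KLap$ has closed range, and one obtains an $L^{2}$-orthogonal Hodge decomposition
\begin{equation}
	P^{p,q}(M) = \calH^{p,q}(M) \oplus \delbb \bigl(P^{p,q-1}(M)\bigr) \oplus \delbb^{\ast}\bigl(P^{p,q+1}(M)\bigr).
\end{equation}
This is the step where embeddability, via the Boutet-de-Monvel/Kohn closed-range theorem, is essential; it is also the main obstacle, since it requires subelliptic estimates for the modified Kohn Laplacian on primitive forms, not just for the classical $\overline{\partial}_b$-Laplacian on $(p,q)$-forms.

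Granting the decomposition, the finish is formal. For $\omega \in P^{p,q}(M)$ with $\delbb\omega = 0$, write $\omega = \eta + \delbb \alpha + \delbb^{\ast}\beta$ with $\eta \in \calH^{p,q}(M)$. Applying $\delbb$ and using $\delbb^{2}=0$ together with $\delbb \eta = 0$ gives $\delbb \delbb^{\ast}\beta = 0$, hence $\Lproduct{\delbb^{\ast}\beta}{\delbb^{\ast}\beta} = \Lproduct{\delbb\delbb^{\ast}\beta}{\beta} = 0$, so $\delbb^{\ast}\beta = 0$ and $\omega = \eta + \delbb \alpha$. This shows every $\delbb$-cohomology class on primitive forms contains a harmonic representative. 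Uniqueness follows because if $\eta \in \calH^{p,q}(M)$ equals $\delbb \alpha$, then $\Lproduct{\eta}{\eta} = \Lproduct{\delbb\alpha}{\eta} = \Lproduct{\alpha}{\delbb^{\ast}\eta} = 0$. The resulting map $\calH^{p,q}(M) \to H^{p,q}_{\KR}(M)$, $\eta \mapsto [\eta]$, is therefore a well-defined, canonical isomorphism.
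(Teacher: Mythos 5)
Your outline is sound, but you should be aware that the paper does not prove this proposition at all: it is quoted verbatim from Case's memoir (Corollary 15.6 there), so there is no internal argument to compare against. Measured against that, your proposal correctly identifies the structure of the standard proof, and your first and last steps are fine: since $\KLap = \tfrac{n-p-q}{n-p-q+1}\delbb\delbb^{\ast} + \delbb^{\ast}\delbb$ and $M$ is closed, integration by parts gives $\Lproduct{\KLap\omega}{\omega} = \tfrac{n-p-q}{n-p-q+1}\Lproduct{\delbb^{\ast}\omega}{\delbb^{\ast}\omega} + \Lproduct{\delbb\omega}{\delbb\omega}$ (note this is an $L^{2}$ identity, not a pointwise one as you wrote), and with $n-p-q \geq 1$ this yields $\calH^{p,q}(M) = \Ker\delbb \cap \Ker\delbb^{\ast}$ on $P^{p,q}(M)$, exactly as remarked in \cref{subsection:Hodge-theory}; the formal passage from the orthogonal decomposition to the isomorphism is also correct, using that $\delbb$, $\delbb^{\ast}$ preserve primitivity and are mutually adjoint there.

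The caveat is that your step (b) --- subellipticity, closed range via embeddability, and the $L^{2}$-orthogonal decomposition $P^{p,q}(M) = \calH^{p,q}(M) \oplus \delbb(P^{p,q-1}(M)) \oplus \delbb^{\ast}(P^{p,q+1}(M))$ --- is not an independent input you can cite and then deduce Corollary 15.6 from; it essentially \emph{is} the content of the cited part of Case's memoir, of which the stated isomorphism is an immediate corollary. So as a self-contained proof your proposal is circular at its core, though as a reduction it is honest, since you flag this as the main obstacle. Two smaller inaccuracies: finite-dimensionality of $\calH^{p,q}(M)$ fails at $q = 0$ (e.g.\ $\calH^{0,0}(M)$ contains all CR functions), but it is not needed for the argument --- only closed range is; and at $q = 0$ the isomorphism is immediate without any analysis, since both $H^{p,0}_{\KR}(M)$ and $\calH^{p,0}(M)$ reduce to $\Ker(\delbb\colon P^{p,0}(M) \to P^{p,1}(M))$.
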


Remark that we can generalize this isomorphisms for all bidegrees;
see \cite{Case2021-Rumin-preprint}*{Section 15} for details.
We conclude this section by giving \Weitzenbock-type formulae for $\KLap$,
which play a crucial role in the proofs of our main theorems.

\begin{proposition}[\cite{Case2021-Rumin-preprint}*{Proposition 13.11 and Corollary 13.13}]
\label{prop:Weitzenbock-type-formula}
	Let $(M, T^{1, 0} M, \theta)$ be a pseudo-Hermitian manifold of dimension $2 n + 1$.
	For any $\omega \in \Omega^{p, q}(M)$ with $p + q \leq n - 1$,
	one has
	\begin{align}
		\KLap \omega
		&= \frac{q}{n} \nabla_{b}^{\ast} \nabla_{b} \omega
			+ \frac{n - q}{n} \ovna_{b}^{\ast} \ovna_{b} \omega
			- \frac{1}{n - p - q + 1} (\del_{b} \del_{b}^{\ast} + \delbb \delbb^{\ast}) \omega \\
		&\quad - R \, \sharp \, \overline{\sharp} \, \omega
			- \frac{q}{n} \Ric \sharp \, \omega - \frac{n - q}{n} \Ric \overline{\sharp} \, \omega \\
		&= \frac{(q - 1) (n - p - q)}{n (n - p - q + 2)} \nabla_{b}^{\ast} \nabla_{b} \omega
			+ \frac{(n - q + 1) (n - p - q)}{n (n - p - q + 2)} \ovna_{b}^{\ast} \ovna_{b} \omega \\
		&\quad + \frac{1}{n - p - q + 2} (\del_{b}^{\ast} \del_{b} + \delbb^{\ast} \delbb) \omega
			- \frac{n - p - q}{n - p - q + 2} R \, \sharp \, \overline{\sharp} \, \omega \\
		&\quad - \frac{(q - 1) (n - p - q)}{n (n - p - q + 2)} \Ric \sharp \, \omega
			- \frac{(n - q + 1) (n - p - q)}{n (n - p - q + 2)} \Ric \overline{\sharp} \, \omega,
	\end{align}
	where $\nabla_{b}^{\ast}$ and $\ovna_{b}^{\ast}$
	are the formal adjoints of $\nabla_{b}$ and $\ovna_{b}$ respectively.
\end{proposition}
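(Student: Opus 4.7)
The plan is to derive both \Weitzenbock-type identities by expanding $\KLap \omega$ directly from the coordinate formulae for $\delbb$ and $\delbb^{\ast}$ given above, and then rewriting each composition of covariant derivatives in terms of the rough Laplacians $\nabla_{b}^{\ast} \nabla_{b}$ and $\ovna_{b}^{\ast} \ovna_{b}$ via the Tanaka-Webster Ricci identities.

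First I would substitute $\KLap = \frac{n - p - q}{n - p - q + 1} \delbb \delbb^{\ast} + \delbb^{\ast} \delbb$ and expand both pieces in components, carefully carrying along the trace-correction terms proportional to $\tensor{l}{_{\alpha}_{\ovxb}}$ that appear in the defining formula for $\delbb$ and that enforce primitivity. After combining with the stated rational coefficient, the second-order terms in which two antiholomorphic derivatives line up reassemble into $\ovna_{b}^{\ast} \ovna_{b} \omega$, while the crossed expressions of the form $\nabla^{\ovxn} \tensor{\nabla}{_{[}_{\ovxb}} \tensor{\omega}{_{A}_{\ovB}_{]}}$ appearing inside the antisymmetrizations are rewritten by commuting the two covariant derivatives past each other. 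The key input is the Ricci identity for the Tanaka-Webster connection, which generates precisely the operators $R \, \sharp \, \overline{\sharp}$, $\Ric \sharp$, and $\Ric \overline{\sharp}$ introduced earlier; any torsion-type or Reeb-derivative contributions coming from $[\nabla_{\alpha}, \nabla_{\ovxb}]$ should cancel against the primitivity projection, reflecting the fact that $\KLap$ preserves the primitive subcomplex. The coefficients $q / n$ and $(n - q) / n$ in the first formula then arise from how the antisymmetrization over $\ovxb \ovB$ distributes the resulting horizontal Laplacian between its $\nabla^{\mu} \nabla_{\mu}$ and $\nabla^{\ovxn} \nabla_{\ovxn}$ parts.

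To pass to the second formula I would use a \Kahler-type identity for the bigraded Rumin complex relating $\del_{b} \del_{b}^{\ast} + \delbb \delbb^{\ast}$ to $\del_{b}^{\ast} \del_{b} + \delbb^{\ast} \delbb$; equivalently, one repeats the expansion of the previous paragraph but organizes the double derivatives so that the inner antisymmetrization is applied first, which exchanges the roles of $\del \del^{\ast}$-type and $\del^{\ast} \del$-type terms. The coefficient shifts from $q / n$ and $(n - q) / n$ to $(q - 1)(n - p - q) / n(n - p - q + 2)$ and $(n - q + 1)(n - p - q) / n(n - p - q + 2)$ are then bookkeeping consequences of the two ways of carrying out the primitivity projection. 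The main obstacle throughout is this combinatorial bookkeeping: every composition involves two nested antisymmetrizations together with the trace correction, and matching the prescribed rational coefficients while tracking the curvature contributions requires patient and systematic use of the antisymmetrization conventions fixed in Section~\ref{section:CR-geometry}.
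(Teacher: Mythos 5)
First, note that the paper does not prove this proposition at all: it is imported verbatim from Case's memoir (Proposition~13.11 and Corollary~13.13 of \cite{Case2021-Rumin-preprint}), so the only meaningful comparison is with that computation. Your overall route --- expand $\KLap = \tfrac{n-p-q}{n-p-q+1}\delbb\delbb^{\ast}+\delbb^{\ast}\delbb$ in components and commute Tanaka--Webster derivatives --- is indeed the standard one, but as written your sketch has a genuine gap at its central point. The proposition is asserted for \emph{every} $\omega\in\Omega^{p,q}(M)$, not only for primitive forms, so your claim that the Reeb-derivative and torsion contributions ``cancel against the primitivity projection'' cannot be the mechanism: no projection occurs anywhere in the operators involved. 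What actually kills the $\nabla_{0}$-terms is the specific weighting of the two rough Laplacians. Commuting $\nabla^{\ovxn}$ past $\nabla_{\ovxb}$ produces, besides curvature, terms of the form $i\,\tensor{l}{_{\mu}_{\ovxb}}\nabla_{0}$, and these can only be absorbed because $\nabla_{b}^{\ast}\nabla_{b}-\ovna_{b}^{\ast}\ovna_{b}$ itself equals a multiple of $i\nabla_{0}$ plus Ricci terms; the coefficients $q/n$ and $(n-q)/n$ are forced precisely by this cancellation, not by ``how the antisymmetrization distributes the horizontal Laplacian.'' Since the entire content of the statement is these rational coefficients (including how the factor $\tfrac{n-p-q}{n-p-q+1}$ interacts with the trace corrections built into $\delbb$ and $\delbb^{\ast}$), a plan that defers all of this to ``patient bookkeeping'' while misidentifying the cancellation mechanism does not establish the formula.

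The passage to the second identity has the same problem one level up: you invoke a K\"ahler-type identity relating $\del_{b}\del_{b}^{\ast}+\delbb\delbb^{\ast}$ to $\del_{b}^{\ast}\del_{b}+\delbb^{\ast}\delbb$, but on a pseudo-Hermitian manifold such an identity is itself a \Weitzenbock-type statement of exactly the same difficulty --- the two combinations differ by rough-Laplacian, curvature, and (a priori) $\nabla_{0}$ terms with degree-dependent coefficients, and this is what Case must prove to pass from Proposition~13.11 to Corollary~13.13. Asserting it without proof, and without checking that the shifted coefficients $\tfrac{(q-1)(n-p-q)}{n(n-p-q+2)}$ and $\tfrac{(n-q+1)(n-p-q)}{n(n-p-q+2)}$ come out as stated, leaves the second half of the proposition unestablished as well. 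To repair the argument you would need to write down the Tanaka--Webster commutation relations on $(p,q)$-tensors explicitly (including the $i\tensor{l}{_{\alpha}_{\ovxb}}\nabla_{0}$ term), carry out the contraction against the antisymmetrizations honestly, and verify the coefficient matching rather than appeal to primitivity.
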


\section{Complex hyperbolic geomerty}
\label{section:complex-hyperbolic-geometry}

In this section,
we recall some basic facts on complex hyperbolic geometry
and the Patterson-Sullivan measure;
see \cites{Goldman1999,Corlette-Iozzi1999,Kapovich2022} and references therein for more details.

\subsection{Complex hyperbolic space}
\label{subsection:complex-hyperbolic-space}

The \emph{complex hyperbolic space} of dimension $n + 1$ is the ball
\begin{equation}
	\ball_{\bbC}^{n + 1}
	\coloneqq \Set{z = (z^{1}, \dots , z^{n + 1}) \in \bbC^{n + 1}|
		\abs{z}^{2} \coloneqq \sum_{i = 1}^{n + 1} \abs{z^{i}}^{2} < 1}
\end{equation}
endowed with the complete \Kahler-Einstein form
\begin{equation}
	\omega_{\ball}
	\coloneqq - \frac{1}{2} d d^{c} \log (1 - \abs{z}^{2}),
\end{equation}
where $d^{c} = (\sqrt{- 1} / 2) (\delb - \del)$.
Denote by $d(z, w)$ the geodesic distance between $z \in \ball_{\bbC}^{n + 1}$ and $w \in \ball_{\bbC}^{n + 1}$.
Note that this satisfies
\begin{equation}
	\cosh^{2} d(z, w)
	= \frac{\abs{1 - z \cdot \ovw}^{2}}{(1 - \abs{z}^{2}) (1 - \abs{w}^{2})}.
\end{equation}
\emph{Complex geodesics} are the non-empty intersections of complex lines and $\ball_{\bbC}^{n + 1}$,
which are isometric to the hyperbolic disk.

The boundary of $\ball_{\bbC}^{n + 1}$, $S^{2 n + 1}$,
has the canonical CR structure $T^{1, 0} S^{2 n + 1}$ as noted in \cref{subsection:CR-structures}.
A canonical contact form $\theta_{0}$ on $S^{2 n + 1}$ is given by
\begin{equation}
	\theta_{0}
	\coloneqq \frac{\sqrt{- 1}}{2} \sum_{i = 1}^{n + 1} (z^{i} d \ovz^{i} - \ovz^{i} d z^{i})|_{S^{2 n + 1}}.
\end{equation}
We also endow $S^{2 n + 1}$ with a sub-Riemannian metric $d_{C}$ as follows.
For any $z, w \in S^{2 n + 1}$,
we can find a smooth path $c \colon \clcl{0}{1} \to S^{2 n + 1}$
such that $c(0) = z$, $c(1) = w$, and $\theta_{0}(c^{\prime}(t)) = 0$.
The \emph{Carnot distance} $d_{C}(z, w)$ between $z$ and $w$ is the infimum of the length of such curves.
Denote by $\dim_{H} A$ the Hausdorff dimension of $A \subset S^{2 n + 1}$ with respect to $d_{C}$.
The Carnot distance $d_{C}$ defines the standard topology of $S^{2 n + 1}$,
but the Hausdorff dimension $\dim_{H} S^{2 n + 1}$of $S^{2 n + 1}$ is equal to $2 n + 2$,
which does not coincide with its topological dimension.
The boundary of a complex geodesic is a circle in $S^{2 n + 1}$
that is transverse to the canonical contact structure on $S^{2 n + 1}$,
which is known as a \emph{chain};
see \cite{Chern-Moser1974} for a more general definition.
Note that the Hausdorff dimension of a chain is equal to $2$.

Let $U(n + 1, 1)$ be the unitary group with respect to
the Hermitian form determined by $\diag(- 1, 1, \dots , 1)$;
that is,
\begin{equation}
	U(n + 1, 1)
	\coloneqq \Set{A \in GL(n + 2, \bbC) | A^{\ast} \Hermform A = \Hermform}.
\end{equation}
This group acts on both $\ball_{\bbC}^{n + 1}$ and $S^{2 n + 1}$ by the fractional linear transformation
\begin{equation}
\label{eq:group-action}
	\begin{pmatrix}
		a & b \\
		c & D
	\end{pmatrix}
	\cdot z
	\coloneqq \frac{c + D z}{a + b z}.
\end{equation}
This action preserves the \Kahler form $\omega_{\ball}$ and the CR structure $T^{1, 0} S^{2 n + 1}$.
The map \cref{eq:group-action} is equal to the identity map if and only if
the matrix is proportional to the identity matrix.
Hence the action of $U(n + 1, 1)$ descends to that of the projective unitary group $PU(n + 1, 1)$.
Moreover,
it is known that
\begin{equation}
	\Aut(\ball_{\bbC}^{n + 1}, \omega_{\ball})
	= \Aut(S^{2 n + 1}, T^{1, 0} S^{2 n + 1})
	= PU(n + 1, 1).
\end{equation}
For each $g \in PU(n + 1, 1)$,
a positive smooth function $j_{g}$ on $S^{2 n + 1}$ is defined by
\begin{equation}
	g^{\ast} \theta_{0}
	= j_{g}^{2} \theta_{0}.
\end{equation}

\subsection{Discrete subgroups of $PU(n + 1, 1)$}
\label{subsection:discrete-subgroups}

Let $\Gamma$ be a discrete subgroup of $PU(n + 1, 1)$.
Note that $\Gamma$ is discrete if and only if
$\Gamma$ acts properly discontinuously on $\ball_{\bbC}^{n + 1}$.
We write $X_{\Gamma}$ for the quotient of $\ball_{\bbC}^{n + 1}$ by $\Gamma$.
The action of $\Gamma$ on $\ball_{\bbC}^{n + 1}$ is free
if and only if $\Gamma$ is torsion-free;
in this case,
$X_{\Gamma}$ is a smooth complex manifold.

The \emph{limit set} $\Lambda_{\Gamma}$ of a discrete subgroup $\Gamma$ of $PU(n + 1, 1)$
is the set of accumulation points in $\overline{\ball}_{\bbC}^{n + 1}$
of the $\Gamma$-orbit of any point in $\ball_{\bbC}^{n + 1}$,
which is a closed subset of $S^{2 n + 1}$.
We call $\Gamma$ \emph{elementary} if $\# \Lambda_{\Gamma} \leq 2$;
otherwise we call $\Gamma$ \emph{non-elementary}.
If $\Gamma$ is non-elementary,
then $\Lambda_{\Gamma}$ is the smallest non-empty closed $\Gamma$-invariant subset of $S^{2 n + 1}$;
see \cite{Kapovich2022}*{Proposition 2} for example.
The complement $\Omega_{\Gamma} \coloneqq S^{2 n + 1} \setminus \Lambda_{\Gamma}$ of $\Lambda_{\Gamma}$
is called the \emph{domain of discontinuity}.
This is the largest open subset of $S^{2 n + 1}$
on which $\Gamma$ acts properly discontinuously.
If $\Omega_{\Gamma}$ is non-empty,
denote by $M_{\Gamma}$ (resp.\ $\ovX_{\Gamma}$)
the quotient of $\Omega_{\Gamma}$ (resp.\ $\ball_{\bbC}^{n + 1} \cup \Omega_{\Gamma}$) by $\Gamma$.
The action of $\Gamma$ on $\Omega_{\Gamma}$ is free
if and only if $\Gamma$ is torsion-free;
in this case,
$\ovX_{\Gamma}$ is a smooth complex manifold with boundary $M_{\Gamma}$,
and $M_{\Gamma}$ is a spherical CR manifold.

Let $\Gamma$ be a discrete subgroup of $PU(n + 1, 1)$ satisfying $\# \Lambda_{\Gamma} \geq 2$.
The \emph{closed convex hull} $C_{\Gamma}$ of $\Lambda_{\Gamma}$
is the intersection of all closed convex subsets in $\ball_{\bbC}^{n + 1}$
whose boundary contains $\Lambda_{\Gamma}$.
This subset is invariant under $\Gamma$,
and we say that $\Gamma$ is \emph{convex cocompact}
if the quotient of $C_{\Gamma}$ by $\Gamma$ is compact.
This condition is equivalent to that $\ovX_{\Gamma}$ is compact~\cite{Bowditch1995};
in particular,
so is $M_{\Gamma}$.

\subsection{Critical exponent and Patterson-Sullivan measure}
\label{subsection:critical-exponent-and-PS-measure}

Let $\Gamma$ be a discrete subgroup of $PU(n + 1, 1)$
and take $z, w \in \ball_{\bbC}^{n + 1}$.
For $s > 0$,
we define
\begin{equation}
	\Phi_{s}(z, w)
	\coloneqq \sum_{g \in \Gamma} e^{- s d(z, g w)} \in \opcl{0}{+ \infty}.
\end{equation}
The \emph{critical exponent} $\delta_{\Gamma}$ of $\Gamma$ is given by
\begin{equation}
	\delta_{\Gamma}
	\coloneqq \inf \Set{s \in \opop{0}{+ \infty} | \Phi_{s}(z, w) < + \infty};
\end{equation}
note that the condition $\Phi_{s}(z, w) < + \infty$
is independent of the choice of $z, w \in \ball_{\bbC}^{n + 1}$.
It is known that $0 \leq \delta_{\Gamma} \leq 2 n + 2$
and $\delta_{\Gamma} = 0$ if and only if $\Gamma$ is elementary.
Moreover,
one has $\delta_{\Gamma} = \dim_{H} \Lambda_{\Gamma}$ if $\Gamma$ is convex cocompact%
~\cite{Corlette-Iozzi1999}*{Theorem 6.1}.

Following Patterson~\cite{Patterson1976} and Sullivan~\cite{Sullivan1979},
Corlette~\cite{Corlette1990}*{Proposition 5.1} has constructed
a probability measure $\mu_{\Gamma}$ on $\Lambda_{\Gamma}$
satisfying $g^{\ast} \mu_{\Gamma} = j_{g}^{\delta_{\Gamma}} \mu_{\Gamma}$ for any $g \in \Gamma$,
which we call the \emph{Patterson-Sullivan measure} of $\Gamma$.
Moreover,
$\mu_{\Gamma}$ coincides with
the $\delta_{\Gamma}$-dimensional Hausdorff measure with respect to $d_{C}$ up to scaling
if $\Gamma$ is non-elementary and convex cocompact~\cite{Corlette1990}*{Theorem 5.4}.

\section{Canonical contact form}
\label{section:canoncal-contact-form}

In this section,
we construct a $\Gamma$-invariant contact form on $\Omega_{\Gamma}$
by using the Patterson-Sullivan measure of $\Gamma$.
Note that this contact form has been introduced by
Nayatani~\cite{Nayatani1999}, Yue~\cite{Yue1999}, and Wang~\cite{Wang2003} independently.

Let $\varphi(z, w) \coloneqq \abs{1 - z \cdot \ovw}$.
This function satisfies
\begin{equation}
	\varphi(g z, g w)
	= j_{g}(z) j_{g}(w) \varphi(z, w)
\end{equation}
for any $g \in PU(n + 1, 1)$.
Note that $\varphi(z, w)^{- n}$ is the Green kernel of the CR Yamabe operator on $S^{2 n + 1}$ up to scaling.

Let $\Gamma$ be a non-elementary discrete group of $PU(n + 1, 1)$ with $\Omega_{\Gamma} \neq \emptyset$.
Note that $\delta_{\Gamma} > 0$.
Define a contact form $\theta_{\Gamma}$ on $\Omega_{\Gamma}$ by
\begin{equation}
	\theta_{\Gamma}
	\coloneqq \rbra*{\int_{\Lambda_{\Gamma}} \varphi(z, w)^{- \delta_{\Gamma}}
		\, d \mu_{\Gamma} (w)}^{2 / \delta_{\Gamma}} \theta_{0}.
\end{equation}
The transformation laws of $\varphi$, $\mu_{\Gamma}$, and $\theta_{0}$ yield that
the contact form $\theta_{\Gamma}$ is $\Gamma$-invariant.
In particular if $\Gamma$ is torsion-free,
$\theta_{\Gamma}$ descends to a contact form on $M_{\Gamma}$;
we use the same letter $\theta_{\Gamma}$ for this contact form by abuse of notation.

We introduce a probability measure
\begin{equation}
	\nu
	\coloneqq \rbra*{\int_{\Lambda_{\Gamma}} \varphi(z, w)^{- \delta_{\Gamma}} \, d \mu_{\Gamma}(w)}^{- 1}
		\varphi(z, \cdot)^{- \delta_{\Gamma}} \mu_{\Gamma}
\end{equation}
and a $(1, 1)$-tensor
\begin{align}
	\tensor{D}{_{\alpha}_{\ovxb}}
	&\coloneqq 2 \int_{\Lambda_{\Gamma}} \varphi_{w}^{- 2}
		(\tensor{\nabla}{_{\alpha}} \varphi_{w}) (\tensor{\nabla}{_{\ovxb}} \varphi_{w}) \, d \nu(w) \\
	&\quad - 2 \rbra*{\int_{\Lambda_{\Gamma}} \varphi_{w}^{- 1} (\tensor{\nabla}{_{\alpha}} \varphi_{w}) \, d \nu(w)}
		\rbra*{\int_{\Lambda_{\Gamma}} \varphi_{w}^{- 1} (\tensor{\nabla}{_{\ovxb}} \varphi_{w}) \, d \nu(w)},
\end{align}
where $\varphi_{w}(z) \coloneqq \varphi(z, w)$.
For any $Z \in T^{1, 0} M$,
the \Holder inequality gives
\begin{equation}
	\abs*{\int_{\Lambda_{\Gamma}} \varphi_{w}^{- 1} (Z \varphi_{w}) \, d \nu(w)}^{2}
	\leq \int_{\Lambda_{\Gamma}} \varphi_{w}^{- 2} \abs{Z \varphi_{w}}^{2} \, d \nu(w).
\end{equation}
This implies that $\tensor{D}{_{\alpha}_{\ovxb}}$ is non-negative as a Hermitian form.
Moreover,
we have

\begin{lemma}[\cite{Nayatani1999}*{Lemma 2.3}]
\label{lem:trace-of-D}
	The trace $\tensor{D}{_{\gamma}^{\gamma}}$ of $\tensor{D}{_{\alpha}_{\ovxb}}$ is positive everywhere
	unless $\Lambda_{\Gamma}$ lies properly in a chain.
	If $\Lambda_{\Gamma}$ lies properly in a chain $C$,
	then $\tensor{D}{_{\gamma}^{\gamma}}$ is positive on $S^{2 n + 1} \setminus C$
	and vanishes on $C \setminus \Lambda_{\Gamma}$.
	In both cases,
	$\tensor{D}{_{\gamma}^{\gamma}}$ is positive on an open dense subset of $\Omega_{\Gamma}$.
\end{lemma}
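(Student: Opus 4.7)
The plan is to express the trace $\tensor{D}{_{\gamma}^{\gamma}}$ as twice the variance (with respect to $\nu$) of the covector-valued function $w \mapsto \tensor{\nabla}{_{\alpha}} \log\varphi(z,w)$, apply the Cauchy-Schwarz inequality to get non-negativity, identify the equality case via complex linear algebra, and finally translate that equality case into the geometric statement that $\Lambda_{\Gamma} \cup \{z\}$ lies in a single chain.

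Setting $\tensor{G}{_{\alpha}}(z,w) \coloneqq \tensor{\nabla}{_{\alpha}} \log\varphi(z,w)$ and contracting with $\tensor{l}{^{\alpha}^{\ovxb}}$ in the definition of $\tensor{D}{_{\alpha}_{\ovxb}}$, one rewrites the trace as
\[
    \tensor{D}{_{\gamma}^{\gamma}}(z)
    = 2 \int_{\Lambda_{\Gamma}} \abs{G(z,w)}^{2} \, d \nu(w)
        - 2 \Bigl\lvert \int_{\Lambda_{\Gamma}} G(z,w) \, d \nu(w) \Bigr\rvert^{2},
\]
where $\abs{G}^{2} \coloneqq \tensor{l}{^{\alpha}^{\ovxb}} \tensor{G}{_{\alpha}} \overline{\tensor{G}{_{\beta}}}$. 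Since $\nu$ is a probability measure, the Cauchy-Schwarz inequality (applied to $G$ and the constant function $1$) gives $\tensor{D}{_{\gamma}^{\gamma}} \geq 0$, with equality iff $w \mapsto G(z,w)$ is $\nu$-essentially constant on $\Lambda_{\Gamma}$. As $\nu$ has full support $\Lambda_{\Gamma}$ and $G(z,\cdot)$ is continuous on $S^{2n+1} \setminus \{z\} \supset \Lambda_{\Gamma}$, essential constancy is equivalent to literal constancy on $\Lambda_{\Gamma}$.

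Next, compute $G$ explicitly. Viewing $z \in S^{2n+1} \subset \bbC^{n+1}$ and $Z \in T^{1,0}_{z} S^{2n+1}$ as a $\bbC^{n+1}$-vector with $Z \cdot \overline{z} = 0$, a direct computation from $\varphi(z,w)^{2} = (1 - z \cdot \overline{w})(1 - \overline{z} \cdot w)$ gives $Z \log\varphi(z,w) = -(Z \cdot \overline{w})/[2(1 - z \cdot \overline{w})]$. The constancy condition $G(z,w_{1}) = G(z,w_{2})$ for all $Z \in T^{1,0}_{z}$ therefore reads $Z \cdot [\overline{w_{1}}(1 - z \cdot \overline{w_{2}}) - \overline{w_{2}}(1 - z \cdot \overline{w_{1}})] = 0$ for every such $Z$. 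Since the annihilator of $T^{1,0}_{z}$ under the $\bbC$-bilinear pairing $(Z,V) \mapsto Z \cdot V$ is $\bbC \overline{z}$, taking complex conjugates gives
\[
    (1 - \overline{z} \cdot w_{2}) w_{1} - (1 - \overline{z} \cdot w_{1}) w_{2} \in \bbC z
    \qquad \forall\, w_{1}, w_{2} \in \Lambda_{\Gamma}.
\]
Fixing $w_{0} \in \Lambda_{\Gamma}$ and writing $\beta_{0} \coloneqq \overline{z} \cdot w_{0}$, an elementary elimination shows every $w \in \Lambda_{\Gamma}$ has the form $w = (w_{0} - \beta_{0} z)/(1 - \beta_{0}) + t(z - w_{0})$ for some $t \in \bbC$; hence $\Lambda_{\Gamma}$ lies on the complex affine line $L \coloneqq (w_{0} - \beta_{0} z)/(1 - \beta_{0}) + \bbC (z - w_{0})$, and one checks directly that $z \in L$ as well. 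Thus $\Lambda_{\Gamma} \cup \{z\} \subset L \cap S^{2n+1}$, and $C \coloneqq L \cap S^{2n+1}$ is a chain. Conversely, when $\Lambda_{\Gamma} \cup \{z\}$ lies in some chain, substituting into the formula for $Z\log\varphi$ shows $G(z,\cdot)$ is constant on that chain.

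Combining, $\tensor{D}{_{\gamma}^{\gamma}}(z) = 0$ iff some chain contains $\Lambda_{\Gamma} \cup \{z\}$. If $\Lambda_{\Gamma}$ is contained in no chain, this never holds and $\tensor{D}{_{\gamma}^{\gamma}} > 0$ on all of $\Omega_{\Gamma}$. If $\Lambda_{\Gamma} \subsetneq C$ for some chain $C$, then since $\#\Lambda_{\Gamma} \geq 2$ and any two distinct points of $S^{2n+1}$ determine a unique chain, $C$ is the unique chain containing $\Lambda_{\Gamma}$, so $\tensor{D}{_{\gamma}^{\gamma}}(z) = 0$ iff $z \in C$; this yields the vanishing on $C \setminus \Lambda_{\Gamma}$ and positivity on $S^{2n+1} \setminus C$. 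In either case the positive set is open and dense in $\Omega_{\Gamma}$, because $C$ is a real $1$-submanifold of the $(2n+1)$-dimensional sphere. The main technical step is the geometric analysis above: translating the pointwise constancy of $G(z,\cdot)$ into the statement that $\Lambda_{\Gamma} \cup \{z\}$ lies on a single complex affine line requires careful use of the $\bbC$-bilinear (not Hermitian) annihilator of $T^{1,0}_{z}$; the remaining arguments are routine.
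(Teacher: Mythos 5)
The paper itself does not prove this lemma: it is quoted verbatim from Nayatani's Lemma~2.3, so there is no in-paper argument to compare with, and your proposal has to be judged as a self-contained proof. As such it is correct, and it follows what is essentially the expected (and, as far as I can tell, the original) route: writing $D_{\gamma}{}^{\gamma}(z)$ as twice the $\nu$-variance of the covector field $w \mapsto \nabla_{\alpha}\log\varphi(z,w)$ gives non-negativity at once, and vanishing at $z$ becomes constancy of this field on $\Lambda_{\Gamma}$. Your computation $Z\log\varphi(z,w) = -\tfrac{Z\cdot\bar{w}}{2(1-z\cdot\bar{w})}$ is right (the antiholomorphic factor of $\varphi^{2}$ is killed by $Z$), the identification of the bilinear annihilator of $T^{1,0}_{z}$ with $\mathbb{C}\bar{z}$ is right, and the elimination does produce $w = \tfrac{w_{0}-\beta_{0}z}{1-\beta_{0}} + t(z-w_{0})$ with $z$ on the same affine line, so the equivalence ``$D_{\gamma}{}^{\gamma}(z)=0$ iff some chain contains $\Lambda_{\Gamma}\cup\{z\}$'' is established; combined with the uniqueness of the chain through two distinct points, this yields exactly the trichotomy in the statement, and the open-dense conclusion follows since a chain is a circle in a $(2n+1)$-manifold.

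Two small points deserve an explicit line. First, you assert that $\nu$ has full support $\Lambda_{\Gamma}$; this is true but needs the observation that $\operatorname{supp}\mu_{\Gamma}$ is a non-empty closed $\Gamma$-invariant set (by $g^{*}\mu_{\Gamma}=j_{g}^{\delta_{\Gamma}}\mu_{\Gamma}$ with $j_{g}>0$) together with the minimality of $\Lambda_{\Gamma}$ for non-elementary $\Gamma$, which the paper records in Section~4. Second, your final case split (``contained in no chain'' versus ``properly contained in a chain'') omits the case $\Lambda_{\Gamma}$ equal to a whole chain; your equivalence does cover it (then no chain contains $\Lambda_{\Gamma}\cup\{z\}$ for $z\in\Omega_{\Gamma}$, so the trace is positive on all of $\Omega_{\Gamma}$, matching ``positive everywhere unless $\Lambda_{\Gamma}$ lies \emph{properly} in a chain''), but it should be said. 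Also, for the vector-valued equality case it is cleaner to cite the variance identity $\int_{\Lambda_{\Gamma}}|G-m|^{2}\,d\nu=\int_{\Lambda_{\Gamma}}|G|^{2}\,d\nu-|m|^{2}$ with $m=\int_{\Lambda_{\Gamma}}G\,d\nu$ than ``Cauchy--Schwarz against the constant function $1$''.
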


The Tanaka-Webster Ricci curvature $\tensor{\Ric}{_{\alpha}_{\ovxb}}$ of $\theta_{\Gamma}$ is given by
\begin{equation}
\label{eq:formula-of-Ricci-curvature}
	\tensor{\Ric}{_{\alpha}_{\ovxb}}
	= - (n + 2) \delta_{\Gamma} \tensor{D}{_{\alpha}_{\ovxb}}
		+ (2 n + 2 - \delta_{\Gamma}) \tensor{D}{_{\gamma}^{\gamma}} \tensor{l}{_{\alpha}_{\ovxb}};
\end{equation}
see \cite{Nayatani1999}*{Proposition 2.2}.
This implies that
\begin{equation}
	\Scal
	= 2 (n + 1) (n - \delta_{\Gamma}) \tensor{D}{_{\gamma}^{\gamma}},
	\qquad
	\tensor{P}{_{\alpha}_{\ovxb}}
	= - \delta_{\Gamma} \tensor{D}{_{\alpha}_{\ovxb}} + \tensor{D}{_{\gamma}^{\gamma}} \tensor{l}{_{\alpha}_{\ovxb}}.
\end{equation}
Moreover,
the Chern tensor vanishes identically since $(S^{2 n + 1}, T^{1, 0} S^{2 n + 1})$ is spherical.
Thus we have
\begin{equation}
\label{eq:formula-of-curvature}
	\tensor{R}{_{\alpha}_{\ovxb}_{\rho}_{\ovxs}}
	= - \delta_{\Gamma} (\tensor{D}{_{\alpha}_{\ovxb}} \tensor{l}{_{\rho}_{\ovxs}}
		+ \tensor{D}{_{\alpha}_{\ovxs}} \tensor{l}{_{\rho}_{\ovxb}}
		+ \tensor{D}{_{\rho}_{\ovxb}} \tensor{l}{_{\alpha}_{\ovxs}}
		+ \tensor{D}{_{\rho}_{\ovxs}} \tensor{l}{_{\alpha}_{\ovxb}})
		+ 2 \tensor{D}{_{\gamma}^{\gamma}} (\tensor{l}{_{\alpha}_{\ovxb}} \tensor{l}{_{\rho}_{\ovxs}}
		+ \tensor{l}{_{\alpha}_{\ovxs}} \tensor{l}{_{\rho}_{\ovxb}}).
\end{equation}

\begin{remark}
\label{rem:scalar-curvature}
	Yue~\cite{Yue1999}*{Theorem A} and Wang~\cite{Wang2003}*{Theorem 1.5} have stated that
	the Tanaka-Webster scalar curvature of $\theta_{\Gamma}$ is positive everywhere if $\delta_{\Gamma} < n$,
	which contradicts \cite{Nayatani1999}*{Theorem 2.4(ii)}.
	Unfortunately,
	the statement given by Yue and Wang is not true.
	Here we give a counterexample.
	Take a convex cocompact torsion-free discrete subgroup $\Gamma$ of $PU(n + 1, 1)$ such that
	$\Lambda_{\Gamma} \subset S^{1} \times \{0\}$ and $0 < \delta_{\Gamma} < 1$;
	such a $\Gamma$ can be constructed a method similar to \cite{Beardon1968}*{p.\ 480} for example.
	Note that $\Lambda_{\Gamma} \neq S^{1} \times \{0\}$
	since
	\begin{equation}
		\dim_{H} \Lambda_{\Gamma} = \delta_{\Gamma} < 1 < 2 = \dim_{H} (S^{1} \times \{0\}).
	\end{equation}
	\cite{Nayatani1999}*{Lemma 2.3} implies that $\tensor{D}{_{\alpha}_{\ovxb}} = 0$
	on $(S^{1} \times \{0\}) \setminus \Lambda_{\Gamma}$.
	In particular,
	$\Scal = 0$ there although $0 < \delta_{\Gamma} < 1 \leq n$.
\end{remark}

\section{Proof of main theorems}
\label{section:proof-of-main-theorems}

In this section,
we give the proofs of our main theorems.
Let $\Gamma$ be a non-elementary torsion-free discrete subgroup of $PU(n + 1, 1)$
such that $M_{\Gamma}$ is compact.
Similar to the action of the Tanaka-Webster curvature on $\Omega^{p, q}(M_{\Gamma})$,
we define
\begin{gather}
	D \, \sharp \, \omega
	\coloneqq  - \frac{p}{p! q!} \tensor{D}{_{[}_{\alpha}^{\mu}}
		\tensor{\omega}{_{\mu}_{A^{\prime}}_{\ovB}_{]}} \theta^{A} \wedge \theta^{\ovB}, \\
	D \, \overline{\sharp} \, \omega
	\coloneqq - \frac{q}{p! q!} \tensor{D}{^{\ovxn}_{[}_{\ovxb}}
		\tensor{\omega}{_{A}_{\ovxn}_{\ovB^{\prime}}_{]}} \theta^{A} \wedge \theta^{\ovB}, \\
	D \, \sharp \, \overline{\sharp} \, \omega
	\coloneqq \frac{p q}{p! q!} \tensor{l}{_{[}_{\alpha}_{\ovxb}} \tensor{D}{^{\ovxn}^{\mu}}
		\tensor{\omega}{_{\mu}_{A^{\prime}}_{\ovxn}_{\ovB^{\prime}}_{]}} \theta^{A} \wedge \theta^{\ovB}
\end{gather}
for $\omega = (p! q!)^{- 1} \tensor{\omega}{_{A}_{\ovB}} \theta^{A} \wedge \theta^{\ovB}
\in \Omega^{p, q}(M_{\Gamma})$.
Note that $\Hproduct{D \, \sharp \, \overline{\sharp} \, \omega}{\tau} = 0$
for any $\tau \in P^{p, q}(M_{\Gamma})$.
Since $\tensor{D}{_{\alpha}_{\ovxb}}$ is non-negative,
\begin{equation}
\label{eq:estimate-of-D}
	- \tensor{D}{_{\gamma}^{\gamma}} \Hproduct{\omega}{\omega}
	\leq \Hproduct{D \, \sharp \, \omega}{\omega}
	\leq 0,
	\qquad
	- \tensor{D}{_{\gamma}^{\gamma}} \Hproduct{\omega}{\omega}
	\leq \Hproduct{D \, \overline{\sharp} \, \omega}{\omega}
	\leq 0
\end{equation}
for any $\omega \in \Omega^{p, q}(M_{\Gamma})$.

\begin{proof}[Proof of \cref{thm:vanishing-of-KR-cohomology-1}]
	It suffices to show that $H^{0, q}_{\KR}(M_{\Gamma}) = 0$ by the Serre duality.
	Moreover,
	\cref{prop:harmonic-representative} implies that
	it is enough to prove that any $\omega \in \calH^{0, q}(M_{\Gamma})$ must be zero.
	Note that
	\begin{equation}
		\del_{b}^{\ast} \omega
		= R \, \sharp \, \overline{\sharp} \, \omega
		= \Ric \sharp \, \omega
		= 0
	\end{equation}
	since $\omega \in \Omega^{0, q}(M_{\Gamma})$.
	It follows from \cref{prop:Weitzenbock-type-formula} that
	\begin{equation}
		0
		= q \nabla_{b}^{\ast} \nabla_{b} \omega
			+ (n - q) \ovna_{b}^{\ast} \ovna_{b} \omega
			- (n - q) \Ric \overline{\sharp} \, \omega
	\end{equation}
	Taking the $L^{2}$-product with $\omega$ gives that
	\begin{align}
		0
		&= q \Lproduct{\nabla_{b} \omega}{\nabla_{b} \omega}
			+ (n - q) \Lproduct{\ovna_{b} \omega}{\ovna_{b} \omega}
			- (n - q) \Lproduct{\Ric \overline{\sharp} \, \omega}{\omega} \\
		&\geq - (n - q) \Lproduct{\Ric \overline{\sharp} \, \omega}{\omega}.
	\end{align}
	On the other hand,
	\cref{eq:formula-of-Ricci-curvature} yields that
	\begin{equation}
		\Ric \overline{\sharp} \, \omega
		= - (n + 2) \delta_{\Gamma} D \, \overline{\sharp} \, \omega
			- q (2 n + 2 - \delta_{\Gamma}) \tensor{D}{_{\gamma}^{\gamma}} \omega.
	\end{equation}
	Hence
	\begin{align}
		\Hproduct{\Ric \overline{\sharp} \, \omega}{\omega}
		&= - (n + 2) \delta_{\Gamma} \Hproduct{D \, \overline{\sharp} \, \omega}{\omega}
			- q (2 n + 2 - \delta_{\Gamma}) \tensor{D}{_{\gamma}^{\gamma}} \Hproduct{\omega}{\omega} \\
		&\leq \sbra*{(n + 2) \delta_{\Gamma} - q (2 n + 2 - \delta_{\Gamma})}
			\tensor{D}{_{\gamma}^{\gamma}} \Hproduct{\omega}{\omega}.
	\end{align}
	Thus we have
	\begin{equation}
		0
		\geq (n - q) \sbra*{q (2 n + 2 - \delta_{\Gamma}) - (n + 2) \delta_{\Gamma}}
			\int_{M_{\Gamma}} \tensor{D}{_{\gamma}^{\gamma}} \Hproduct{\omega}{\omega}
			\, \theta_{\Gamma} \wedge (d \theta_{\Gamma})^{n}.
	\end{equation}
	Now it follows from the assumption that
	\begin{equation}
		(n - q) \sbra*{q (2 n + 2 - \delta_{\Gamma}) - (n + 2) \delta_{\Gamma})}
		> 0.
	\end{equation}
	Moreover,
	$\tensor{D}{_{\gamma}^{\gamma}}$ is positive on an open dense subset by \cref{lem:trace-of-D}.
	Therefore $\Hproduct{\omega}{\omega}$ is equal to zero on an open dense subset,
	which implies $\omega = 0$ by the continuity.
\end{proof}

\begin{example}
\label{example:closed-quotient}
	Let $\Gamma_{0}$ be a torsion-free discrete subgroup of $U(k, 1)$ with $1 \leq k < n / 2$
	such that the quotient $\ball_{\bbC}^{k} / \Gamma_{0}$ is compact.
	We embed $U(k, 1)$ into $PU(n + 1, 1)$ by the composition of
	\begin{equation}
		U(k, 1) \to U(n + 1, 1);
		\qquad
		A \mapsto
		\begin{pmatrix}
			A & 0 \\
			0 & I_{n - k + 1}
		\end{pmatrix}
	\end{equation}
	and the canonical projection $U(n + 1, 1) \to PU(n + 1, 1)$.
	Denote by $\Gamma$ the image of $\Gamma_{0}$ under this embedding.
	The group $\Gamma$ is a torsion-free discrete subgroup of $PU(n + 1, 1)$
	with $\Lambda_{\Gamma} = S^{2 k - 1} \times \{0\} \subset S^{2 n + 1}$.
	The closed convex hull $C_{\Gamma}$ of $\Lambda_{\Gamma}$ coincides with $\ball_{\bbC}^{k} \times \{0\}$.
	Thus we have $C_{\Gamma} / \Gamma = (\ball_{\bbC}^{k} / \Gamma_{0}) \times \{0\}$,
	which is compact.
	Hence $\Gamma$ is convex cocompact and
	\begin{equation}
		\delta_{\Gamma}
		= \dim_{H} \Lambda_{\Gamma}
		= 2 k
		< n.
	\end{equation}
	\cref{thm:vanishing-of-KR-cohomology-1} implies that
	$H^{0, q}_{\KR}(M_{\Gamma}) = H^{n + 1, n - q}_{\KR}(M_{\Gamma}) = 0$
	for any integer $q$ with $k (n + 2) / (n - k + 1) < q \leq n - 1$.
	Note that we can identify $M_{\Gamma}$ with a principal $S^{1}$-bundle
	over $(\ball_{\bbC}^{k} / \Gamma_{0}) \times \cps^{n - k}$ as follows.
	Consider the following projection:
	\begin{equation}
		\pi \colon \Omega_{\Gamma} \to \ball_{\bbC}^{k} \times \cps^{n - k};
		\quad
		(z^{1}, \dots , z^{k}, z^{k + 1}, \dots , z^{n + 1})
		\mapsto (z^{1}, \dots , z^{k}, [z^{k + 1} : \dots : z^{n + 1}]).
	\end{equation}
	This projection satisfies
	$\pi_{\ast} T^{1, 0} \Omega_{\Gamma} = T^{1, 0} (\ball_{\bbC}^{k} \times \cps^{n - k})$.
	Moreover,
	$\Omega_{\Gamma}$ has the free $S^{1}$-action
	\begin{equation}
		e^{i \theta} \cdot (z^{1}, \dots , z^{k}, z^{k + 1}, \dots , z^{n + 1})
		= (z^{1}, \dots , z^{k}, e^{i \theta} z^{k + 1}, \dots , e^{i \theta} z^{n + 1}),
	\end{equation}
	which preserves the CR structure and commutes with the action of $\Gamma$.
	Thus we have a principal $S^{1}$-bundle
	\begin{equation}
		\pi_{\Gamma} \colon M_{\Gamma} \to Y \coloneqq (\ball_{\bbC}^{k} / \Gamma_{0}) \times \cps^{n - k}
	\end{equation}
	such that $T^{1, 0} M_{\Gamma}$ is $S^{1}$-invariant
	and $(\pi_{\Gamma})_{\ast} T^{1, 0} M_{\Gamma} = T^{1, 0} Y$.
\end{example}

The following proposition implies that
the condition for $q$ in \cref{thm:vanishing-of-KR-cohomology-1} is optimal for $n \geq 3$;
see \cref{prop:vanishing-KR-cohomology-with-small-delta} for $n = 2$.

\begin{proposition}
\label{prop:optimal-degree-condition}
	For each $n \geq 2$,
	there exists a torsion-free convex cocompact discrete subgroup $\Gamma$ of $PU(n + 1, 1)$
	such that $\delta_{\Gamma} = 2$ and $H^{0, 1}_{\KR}(M_{\Gamma}) \neq 0$.
\end{proposition}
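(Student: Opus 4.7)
The plan is to adapt the construction of \cref{example:closed-quotient} at $k = 1$, which already gives $\delta_\Gamma = 2$ for $n \geq 3$ and which also yields a convex cocompact $\Gamma$ at $n = 2$ (even though the example formally assumes $k < n / 2$), and then to produce a nonzero Kohn-Rossi class by pulling a Dolbeault class back from the base Riemann surface.

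First, I would fix a torsion-free cocompact discrete subgroup $\Gamma_{0}$ of $U(1, 1)$, so that $\Sigma \coloneqq \ball_{\bbC}^{1} / \Gamma_{0}$ is a compact Riemann surface of genus $g \geq 2$; embed $\Gamma_{0}$ into $PU(n + 1, 1)$ via $A \mapsto \diag(A, I_{n})$ as in \cref{example:closed-quotient}, and let $\Gamma$ be the image. The argument of \cref{example:closed-quotient} then shows that $\Gamma$ is torsion-free with $\Lambda_{\Gamma} = S^{1} \times \{0\}$, that $C_{\Gamma} = \ball_{\bbC}^{1} \times \{0\}$ is totally geodesic so that $C_{\Gamma} / \Gamma = \Sigma$ is compact (hence $\Gamma$ is convex cocompact and $\delta_{\Gamma} = \dim_{H} \Lambda_{\Gamma} = 2$), and that $M_{\Gamma}$ is a principal $S^{1}$-bundle $\pi_{\Gamma} \colon M_{\Gamma} \to Y \coloneqq \Sigma \times \cps^{n - 1}$ whose $S^{1}$-invariant CR structure satisfies $(\pi_{\Gamma})_{\ast} T^{1, 0} M_{\Gamma} = T^{1, 0} Y$. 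These statements go through unchanged for $n = 2$, since the restriction $k < n / 2$ in \cref{example:closed-quotient} is only used to ensure $\delta_{\Gamma} < n$ for the application of \cref{thm:vanishing-of-KR-cohomology-1}.

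Next, I would pick a $\delb$-closed $(0, 1)$-form $\alpha$ on $\Sigma$ representing a nonzero class in $H^{0, 1}(\Sigma) \cong \bbC^{g}$ and set $\omega \coloneqq \pi_{\Gamma}^{\ast} p_{1}^{\ast} \alpha$, where $p_{1} \colon Y \to \Sigma$ is the first projection. Because the pushforwards $(\pi_{\Gamma})_{\ast}$ and $(p_{1})_{\ast}$ preserve the type decomposition of the complexified tangent bundles, $\omega$ is a smooth $(0, 1)$-form on $M_{\Gamma}$, which is automatically primitive, and $\delbb \omega = \pi_{\Gamma}^{\ast} p_{1}^{\ast} \delb \alpha = 0$, so $[\omega] \in H^{0, 1}_{\KR}(M_{\Gamma})$.

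The remaining main step, and what I expect to be the only nontrivial one, is to show $[\omega] \neq 0$. Suppose $\omega = \delbb f$ for some $f \in C^{\infty}(M_{\Gamma})$. Since $\omega$ is $S^{1}$-invariant and the $S^{1}$-action preserves the CR structure (hence commutes with $\delbb$), averaging $f$ over $S^{1}$ still yields a solution, so one may take $f = \pi_{\Gamma}^{\ast} F$ for some $F \in C^{\infty}(Y)$; the identity $(\pi_{\Gamma})_{\ast} T^{0, 1} M_{\Gamma} = T^{0, 1} Y$ then gives $\delbb \pi_{\Gamma}^{\ast} F = \pi_{\Gamma}^{\ast} \delb F$, and hence $\delb F = p_{1}^{\ast} \alpha$ on $Y$. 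But the Künneth formula for Dolbeault cohomology gives $H^{0, 1}(Y) \cong H^{0, 1}(\Sigma)$ since $H^{0, j}(\cps^{n - 1}) = 0$ for $j > 0$, with $p_{1}^{\ast}$ the identification; this contradicts $[\alpha] \neq 0$. The main obstacle is therefore this $S^{1}$-averaging reduction together with the translation of $\delbb$-cohomology on $M_{\Gamma}$ into Dolbeault cohomology on $Y$, but both steps follow directly from the $S^{1}$-invariance of the CR structure and the transversality identity $(\pi_{\Gamma})_{\ast} T^{0, 1} M_{\Gamma} = T^{0, 1} Y$.
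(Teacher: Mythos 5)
Your proposal is correct and follows essentially the same route as the paper: take the $k=1$ construction of \cref{example:closed-quotient}, pull back a nontrivial class from $H^{0,1}(\Sigma)$ via $\pi_{\Gamma}$, and rule out triviality by $S^{1}$-averaging a potential primitive and descending to $Y$. Your extra remarks (handling $n=2$ where $k<n/2$ fails, and the Künneth identification $H^{0,1}(Y)\cong H^{0,1}(\Sigma)$) only make explicit points the paper leaves implicit.
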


\begin{proof}
	Take a torsion-free discrete subgroup $\Gamma_{0}$ of $U(1, 1)$
	so that $\Sigma \coloneqq \ball_{\bbC}^{1} / \Gamma_{0}$ is a closed Riemann surface of genus $\geq 2$.
	Let $\Gamma$ be as in \cref{example:closed-quotient}.
	Take a $\delb$-closed $(0, 1)$-form $\omega_{0}$ on $Y$
	such that $[\omega_{0}] \neq 0$ in $H^{0, 1}(Y)$;
	such an $\omega_{0}$ exists since $H^{0, 1}(\Sigma) \neq 0$.
	Then $\omega \coloneqq \pi_{\Gamma}^{\ast} \omega_{0} \in F^{0} \Omega^{1}_{\bbC}(M_{\Gamma})$
	and $\delbb [\omega] = 0$.
	Suppose to the contrary that $[\omega]$ defines a trivial cohomology class in $H^{0, 1}_{\KR}(M_{\Gamma})$.
	This means that there exists $f \in C^{\infty}(M_{\Gamma})$
	such that $(d f)|_{T^{0, 1} M_{\Gamma}} = \omega$.
	Taking the mean of $f$ with respect to the $S^{1}$-action,
	we may assume that $f$ is $S^{1}$-invariant
	and consider $f$ as a smooth function on $Y$.
	Then $\omega_{0} = \delb f$,
	which contradicts the assumption $[\omega_{0}] \neq 0$ in $H^{0, 1}(Y)$.
	Therefore $H^{0, 1}_{\KR}(M_{\Gamma}) \neq 0$.
\end{proof}

We also consider $H^{p, q}_{\KR}(M_{\Gamma})$ for a general bidegree.
In this case,
the assumption is rather complicated
since $R \, \sharp \, \overline{\sharp} \, \omega$ and $\Ric \sharp \, \omega$ are non-trivial.

\begin{proof}[Proof of \cref{thm:vanishing-of-KR-cohomology-2}]
	It suffices to show that $H^{p, q}_{\KR}(M_{\Gamma}) = 0$ by the Serre duality.
	Moreover,
	\cref{prop:harmonic-representative} implies that
	it is enough to prove that any $\omega \in \calH^{p, q}(M_{\Gamma})$ must be zero.
	It follows from \cref{prop:Weitzenbock-type-formula} that
	\begin{align}
		0
		&= (q - 1) \nabla_{b}^{\ast} \nabla_{b} \omega
			+ (n - q + 1) \ovna_{b}^{\ast} \ovna_{b} \omega
			+ \frac{n}{n - p - q} \del_{b}^{\ast} \del_{b} \omega \\
		&\quad - n R \, \sharp \, \overline{\sharp} \, \omega
			- (q - 1) \Ric \sharp \, \omega
			- (n - q + 1) \Ric \overline{\sharp} \, \omega.
	\end{align}
	Taking the $L^{2}$-product with $\omega$ gives that
	\begin{align}
		0
		&= (q - 1) \Lproduct{\nabla_{b} \omega}{\nabla_{b} \omega}
			+ (n - q + 1) \Lproduct{\ovna_{b} \omega}{\ovna_{b} \omega}
			+ \frac{n}{n - p - q} \Lproduct{\del_{b} \omega}{\del_{b} \omega} \\
		&\quad + \Lproduct{- n R \, \sharp \, \overline{\sharp} \, \omega
			- (q - 1) \Ric \sharp \, \omega
			- (n - q + 1) \Ric \overline{\sharp} \, \omega}{\omega} \\
		&\geq \Lproduct{- n R \, \sharp \, \overline{\sharp} \, \omega
			- (q - 1) \Ric \sharp \, \omega
			- (n - q + 1) \Ric \overline{\sharp} \, \omega}{\omega}.
	\end{align}
	On the other hand,
	\cref{eq:formula-of-Ricci-curvature,eq:formula-of-curvature} yield that
	\begin{gather}
		R \, \sharp \, \overline{\sharp} \, \omega
		= q \delta_{\Gamma} D \, \sharp \, \omega + p \delta_{\Gamma} D \, \overline{\sharp} \, \omega
			+ 2 p q \tensor{D}{_{\gamma}^{\gamma}} \omega
			- \delta_{\Gamma} D \, \sharp \, \overline{\sharp} \, \omega, \\
		\Ric \sharp \, \omega
		= - (n + 2) \delta_{\Gamma} D \, \sharp \, \omega
			- p (2 n + 2 - \delta_{\Gamma}) \tensor{D}{_{\gamma}^{\gamma}} \omega, \\
		\Ric \overline{\sharp} \, \omega
		= - (n + 2) \delta_{\Gamma} D \, \overline{\sharp} \, \omega
			- q (2 n + 2 - \delta_{\Gamma}) \tensor{D}{_{\gamma}^{\gamma}} \omega.
	\end{gather}
	Hence
	\begin{align}
		&\Hproduct{- n R \, \sharp \, \overline{\sharp} \, \omega
			- (q - 1) \Ric \sharp \, \omega
			- (n - q + 1) \Ric \overline{\sharp} \, \omega}{\omega} \\
		&= (2 q - n - 2) \delta_{\Gamma} \Hproduct{D \, \sharp \, \omega}{\omega}
			+ ((n + 2) (n - q + 1) - n p) \delta_{\Gamma} \Hproduct{D \, \overline{\sharp} \, \omega}{\omega} \\
		&\quad + \sbra*{2 ((n + 1) q - p) (n - q + 1) - (p (q - 1) + q (n - q + 1)) \delta_{\Gamma}}
			\tensor{D}{_{\gamma}^{\gamma}} \Hproduct{\omega}{\omega}.
	\end{align}
	Here we have
	\begin{equation}
		(n + 2) (n - q + 1) - n p
		= n (n - p - q + 1) + 2 (n - q + 1)
		> 0.
	\end{equation}
	This and \cref{eq:estimate-of-D} imply
	\begin{align}
		&\Hproduct{- n R \, \sharp \, \overline{\sharp} \, \omega
			- (q - 1) \Ric \sharp \, \omega
			- (n - q + 1) \Ric \overline{\sharp} \, \omega}{\omega} \\
		&= (2 q - n - 2) \delta_{\Gamma} \Hproduct{D \, \sharp \, \omega}{\omega}
			+ ((n + 2) (n - q + 1) - n p) \delta_{\Gamma}
			\Hproduct{\tensor{D}{_{\gamma}^{\gamma}} \omega + D \, \overline{\sharp} \, \omega}{\omega} \\
		&\quad + (n - q + 1) \sbra*{2 ((n + 1) q - p) - (n - p + q + 2) \delta_{\Gamma}}
			\tensor{D}{_{\gamma}^{\gamma}} \Hproduct{\omega}{\omega} \\
		&\geq (2 q - n - 2) \delta_{\Gamma} \Hproduct{D \, \sharp \, \omega}{\omega} \\
		&\quad + (n - q + 1) \sbra*{2 ((n + 1) q - p) - (n - p + q + 2) \delta_{\Gamma}}
			\tensor{D}{_{\gamma}^{\gamma}} \Hproduct{\omega}{\omega}.
	\end{align}

	If $2 q \leq n + 2$,
	then \cref{eq:estimate-of-D} yields
	\begin{align}
		&\Hproduct{- n R \, \sharp \, \overline{\sharp} \, \omega
			- (q - 1) \Ric \sharp \, \omega
			- (n - q + 1) \Ric \overline{\sharp} \, \omega}{\omega} \\
		&\geq (n - q + 1) \sbra*{2 ((n + 1) q - p) - (n - p + q + 2) \delta_{\Gamma}}
			\tensor{D}{_{\gamma}^{\gamma}} \Hproduct{\omega}{\omega}.
	\end{align}
	Thus we have
	\begin{equation}
		0
		\geq (n - q + 1) \sbra*{2 ((n + 1) q - p) - (n - p + q + 2) \delta_{\Gamma}}
			\int_{M_{\Gamma}} \tensor{D}{_{\gamma}^{\gamma}} \Hproduct{\omega}{\omega}
			\, \theta_{\Gamma} \wedge (d \theta_{\Gamma})^{n}.
	\end{equation}
	Now it follows from $\delta_{\Gamma} < m_{p, q}$ that
	\begin{equation}
		(n - q + 1) \sbra*{2 ((n + 1) q - p) - (n - p + q + 2) \delta_{\Gamma}}
		> 0.
	\end{equation}
	Moreover,
	$\tensor{D}{_{\gamma}^{\gamma}}$ is positive on an open dense subset by \cref{lem:trace-of-D}.
	Therefore $\Hproduct{\omega}{\omega}$ is equal to zero on an open dense subset,
	which implies $\omega = 0$ by the continuity.

	If $2 q \geq n + 2$,
	then \cref{eq:estimate-of-D} yields
	\begin{align}
		&\Hproduct{- n R \, \sharp \, \overline{\sharp} \, \omega
			- (q - 1) \Ric \sharp \, \omega
			- (n - q + 1) \Ric \overline{\sharp} \, \omega}{\omega} \\
		&\geq (2 q - n - 2) \delta_{\Gamma}
			\Hproduct{\tensor{D}{_{\gamma}^{\gamma}} \omega + D \, \sharp \, \omega}{\omega} \\
		&\quad + \sbra*{2 ((n + 1) q - p) (n - q + 1) - ((n - p + q) (n - q + 1) + n) \delta_{\Gamma}} 
			\tensor{D}{_{\gamma}^{\gamma}} \Hproduct{\omega}{\omega} \\
		&\geq \sbra*{2 ((n + 1) q - p) (n - q + 1) - ((n - p + q) (n - q + 1) + n) \delta_{\Gamma}} 
			\tensor{D}{_{\gamma}^{\gamma}} \Hproduct{\omega}{\omega}.
	\end{align}
	Thus we have
	\begin{equation}
		0
		\geq \sbra*{2 ((n + 1) q - p) (n - q + 1) - ((n - p + q) (n - q + 1) + n) \delta_{\Gamma}}
			\int_{M_{\Gamma}} \tensor{D}{_{\gamma}^{\gamma}} \Hproduct{\omega}{\omega}
			\, \theta_{\Gamma} \wedge (d \theta_{\Gamma})^{n}.
	\end{equation}
	Now it follows from $\delta_{\Gamma} < m_{p, q}$ that
	\begin{equation}
		2 ((n + 1) q - p) (n - q + 1) - ((n - p + q) (n - q + 1) + n) \delta_{\Gamma}
		> 0.
	\end{equation}
	Moreover,
	$\tensor{D}{_{\gamma}^{\gamma}}$ is positive on an open dense subset by \cref{lem:trace-of-D}.
	Therefore $\Hproduct{\omega}{\omega}$ is equal to zero on an open dense subset,
	which implies $\omega = 0$ by the continuity.
\end{proof}

\section{Concluding remarks}
\label{section:concluding-remarks}

In our main theorems,
we study $H^{p, q}_{\KR}(M_{\Gamma})$ with $p + q \leq n - 1$
since the \Weitzenbock-type formulae (\cref{prop:Weitzenbock-type-formula}) have been proved only for these degrees.
If $p + q = n$,
then we need to consider fourth order differential operator~\cite{Case2021-Rumin-preprint}*{Definition 13.1},
and there exist no known \Weitzenbock-type formulae.
However,
we can show that $H^{p, q}_{\KR}(M_{\Gamma}) = 0$ even for $p + q = n$
if $\Gamma$ is convex cocompact and $\delta_{\Gamma} < 2$.

\begin{proposition}
\label{prop:vanishing-KR-cohomology-with-small-delta}
	Let $\Gamma$ be a torsion-free convex cocompact discrete subgroup of $PU(n + 1, 1)$
	with $\delta_{\Gamma} < 2$.
	Then $H^{p, q}_{\KR}(M_{\Gamma}) = 0$
	for any $0 \leq p \leq n + 1$ and $1 \leq q \leq n - 1$.
\end{proposition}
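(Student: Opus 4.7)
\emph{Proof plan.} The Weitzenb\"ock arguments behind \cref{thm:vanishing-of-KR-cohomology-1,thm:vanishing-of-KR-cohomology-2} are confined to $p+q\le n-1$, and in that range they only deliver vanishing for $\delta_\Gamma<m_{p,q}$; since $m_{p,1}<2$ for every $p$, a genuinely new input is needed to push through to all $\delta_\Gamma<2$ when $1\le q\le n-1$ (in particular for $q=1$ and for $p+q\in\{n,n+1\}$, which is the novelty emphasized by the proposition). The plan is to exchange the intrinsic problem on $M_\Gamma$ for a problem on the compact filling $\bar X_\Gamma=(\ball_{\bbC}^{n+1}\cup\Omega_\Gamma)/\Gamma$, which exists as a compact complex $(n+1)$-manifold with strictly pseudoconvex boundary $M_\Gamma$ precisely because $\Gamma$ is convex cocompact.

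First, apply the Serre duality $H^{p,q}_{\KR}(M_\Gamma)\cong H^{n+1-p,n-q}_{\KR}(M_\Gamma)$ recalled in \cref{subsection:KR-cohomology} to reduce to bidegrees with $p+q\le n$ and $1\le q\le n-1$. Second, convert the Kohn--Rossi statement into a Dolbeault statement on $\bar X_\Gamma$: since $(\bar X_\Gamma,M_\Gamma)$ is compact, strictly pseudoconvex, and of real dimension $2n+1\ge 5$, Kohn's $\delbb$-Neumann theorem applies, and together with relative Dolbeault theory it produces a long exact sequence
\[
\cdots\to H^{p,q}_c(X_\Gamma)\to H^{p,q}_{\delbb}(\bar X_\Gamma)\to H^{p,q}_{\KR}(M_\Gamma)\to H^{p,q+1}_c(X_\Gamma)\to\cdots
\]
for $1\le q\le n-1$, reducing the proposition to Dolbeault vanishing on the filling.

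Finally, extract the filling vanishing from the bound $\delta_\Gamma<2$. This bound forces $\dim_H\Lambda_\Gamma<2$, so $\Lambda_\Gamma$ contains no chain; standard complex-hyperbolic Kleinian arguments then force $\Gamma$ to be virtually free, and since $\ball_{\bbC}^{n+1}$ is contractible, $X_\Gamma$ is a $K(\Gamma,1)$ homotopy equivalent to a finite graph. Combined with its complex-hyperbolic K\"ahler structure, this should yield that $X_\Gamma$ is Stein, whence Cartan's Theorem~B kills $H^{p,q}_{\delbb}(X_\Gamma)$ for $q\ge 1$, and Serre duality on the noncompact Stein manifold $X_\Gamma$ kills the compactly supported groups; the long exact sequence then delivers the conclusion.

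The main obstacle is this last step: translating the Kleinian bound $\delta_\Gamma<2$ into honest Dolbeault vanishing on $\bar X_\Gamma$. Virtual freeness of $\Gamma$ does not automatically Stein-ify the quotient of a Stein manifold, so a careful complex-analytic argument---possibly using the K\"ahler-Einstein metric directly, or a Nakano-type vanishing on $\bar X_\Gamma$---is required. A secondary difficulty is ensuring the long exact sequence above is valid in \emph{all} needed bidegrees, notably $p+q\in\{n,n+1\}$, where Kohn--Rossi cohomology is computed not by the Rumin complex of \cref{section:KR-cohomology-and-Hodge-theory} but by the longer fourth-order complex of~\cite{Case2021-Rumin-preprint}.
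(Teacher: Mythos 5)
Your overall strategy---trade the boundary problem for a Stein property of the filling $X_{\Gamma}$ and then invoke a vanishing theorem linking $H^{p,q}_{\KR}(M_{\Gamma})$ to the filling---is in spirit the paper's route, but the step you yourself flag as the ``main obstacle'' is precisely the content of the proof, and your sketch of it does not work. First, the claim that $\delta_{\Gamma}<2$ forces $\Gamma$ to be virtually free is false: a cocompact Fuchsian group preserving a totally real hyperbolic plane in $\ball_{\bbC}^{n+1}$ has limit set an $\bbR$-circle, whose Hausdorff dimension with respect to $d_{C}$ is $1$, so $\delta_{\Gamma}=1<2$ while $\Gamma$ is a closed surface group. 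Second, as you concede, even virtual freeness (plus the K\"ahler structure) would not by itself make $X_{\Gamma}$ Stein. The paper closes this gap by quoting \cite{Dey-Kapovich2020}*{Theorem 1}: for a torsion-free discrete $\Gamma<PU(n+1,1)$ with $\delta_{\Gamma}<2$, the quotient $X_{\Gamma}$ is a Stein manifold. Without that (or an equivalent complex-analytic input) your argument does not get off the ground, so the proposal has a genuine gap rather than an alternative proof.

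The second half of your plan is also heavier than necessary. Instead of building a long exact sequence relating $H^{p,q}_{c}(X_{\Gamma})$, $H^{p,q}_{\delbb}(\ovX_{\Gamma})$ and $H^{p,q}_{\KR}(M_{\Gamma})$---where, as you note, one must worry about the bidegrees $p+q\in\{n,n+1\}$ and about boundary regularity---the paper applies \cite{Yau1981}*{Theorem B} directly: for $0\leq p\leq n+1$ and $1\leq q\leq n-1$, $\dim H^{p,q}_{\KR}(M_{\Gamma})$ equals a sum of local invariants attached to the singular points of a Stein space bounding $M_{\Gamma}$. Since the Stein filling $X_{\Gamma}$ provided by Dey--Kapovich is smooth, there are no singular points and all these groups vanish at once, with no need for Serre duality, Cartan's Theorem B, or compactly supported Dolbeault vanishing, and with the delicate bidegrees handled automatically.
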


\begin{proof}
	It follows from \cite{Yau1981}*{Theorem B} that
	the dimension of the Kohn-Rossi cohomology $H^{p, q}_{\KR}(M_{\Gamma})$
	for $0 \leq p \leq n + 1$ and $1 \leq q \leq n - 1$
	is given by the sum of local invariants at the singularities of a Stein space
	that bounds $M_{\Gamma}$.
	On the other hand,
	\cite{Dey-Kapovich2020}*{Theorem 1} implies that
	$X_{\Gamma}$ is a Stein manifold;
	in particular,
	$X_{\Gamma}$ has no singularities and bounds $M_{\Gamma}$.
	Thus we have $H^{p, q}_{\KR}(M_{\Gamma}) = 0$ for any $0 \leq p \leq n + 1$ and $1 \leq q \leq n - 1$.
\end{proof}

\begin{example}
	Let $\Gamma$ be a torsion-free discrete subgroup of $PU(n + 1, 1)$ generated by the matrix
	\begin{equation}
		\begin{pmatrix}
			\sqrt{2} & - 1 & 0 \\
			- 1 & \sqrt{2} & 0 \\
			0 & 0 & I_{n}
		\end{pmatrix}
		.
	\end{equation}
	The limit set $\Lambda_{\Gamma}$ of $\Gamma$ is $\{(\pm 1, 0, \dots , 0)\}$
	and its closed convex hull $C_{\Gamma}$ is given by $\opop{- 1}{1} \times \{0\}$.
	In particular,
	$\Gamma$ is elementary and $\delta_{\Gamma} = 0$.
	We can see that $C_{\Gamma} / \Gamma$ is compact,
	which means that $\Gamma$ is convex cocompact.
	\cref{prop:vanishing-KR-cohomology-with-small-delta} yields that
	$H^{p, q}_{\KR}(M_{\Gamma}) = 0$ for any $0 \leq p \leq n + 1$ and $1 \leq q \leq n - 1$.
	Note that $M_{\Gamma}$ coincides with the CR Hopf manifold,
	and this result is an improvement of \cite{Case2021-Rumin-preprint}*{Example 17.6}.
\end{example}

We also add a comment on the condition for $\delta_{\Gamma}$.
We need to impose $\delta_{\Gamma} < n$
in \cref{thm:vanishing-of-KR-cohomology-1,thm:vanishing-of-KR-cohomology-2,prop:vanishing-KR-cohomology-with-small-delta}.
It is natural to ask what happens when $\delta_{\Gamma} \geq n$.
The following proposition shows that
the Kohn-Rossi cohomology may vanish even when $\delta_{\Gamma} = n$.

\begin{proposition}
\label{prop:vanishing-KR-cohomology-with-n}
	For each $n \geq 2$,
	there exists a torsion-free convex cocompact discrete subgroup $\Gamma$ of $PU(n + 1, 1)$
	such that $\delta_{\Gamma} = n$ and
	$H^{p, q}_{\KR}(M_{\Gamma}) = 0$ for any $0 \leq p \leq n + 1$ and $1 \leq q \leq n - 1$.
\end{proposition}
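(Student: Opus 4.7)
The plan is to take $\Gamma$ to be a cocompact real hyperbolic lattice embedded in $PU(n+1,1)$. Specifically, choose a torsion-free cocompact lattice $\Gamma_{0}$ in $PO(n+1,1)$, which exists for every $n \ge 2$ by standard arithmetic constructions, and let $\Gamma$ be its image under the injection $PO(n+1,1) \hookrightarrow PU(n+1,1)$ induced by $O(n+1,1) \hookrightarrow U(n+1,1)$; then $\Gamma$ is torsion-free and discrete.

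I would verify the structural properties of $\Gamma$ as follows. The totally real, totally geodesic submanifold $\bbH_{\bbR}^{n+1} \coloneqq \ball_{\bbC}^{n+1} \cap \bbR^{n+1}$ is $\Gamma$-invariant with compact quotient $\bbH_{\bbR}^{n+1}/\Gamma_{0}$. Its ideal boundary is the real sphere $S^{n} \coloneqq S^{2n+1} \cap \bbR^{n+1}$, and since $\Gamma$ is non-elementary, minimality of the limit set forces $\Lambda_{\Gamma} = S^{n}$. Every complex-hyperbolic geodesic joining two real boundary points lies in $\bbH_{\bbR}^{n+1}$, so $C_{\Gamma} = \bbH_{\bbR}^{n+1}$; hence $C_{\Gamma}/\Gamma$ is compact and $\Gamma$ is convex cocompact. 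Writing $z^{i} = x^{i} + \sqrt{-1}\, y^{i}$, one sees that $\theta_{0}|_{S^{2n+1}} = \sum(x^{i}\, dy^{i} - y^{i}\, dx^{i})$ vanishes identically along $S^{n}$, so $S^{n}$ is a Legendrian submanifold of $S^{2n+1}$. Consequently its Carnot Hausdorff dimension equals its topological dimension $n$, and \cite{Corlette-Iozzi1999}*{Theorem 6.1} yields $\delta_{\Gamma} = \dim_{H}\Lambda_{\Gamma} = n$.

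For the vanishing statement I would repeat the strategy of \cref{prop:vanishing-KR-cohomology-with-small-delta}: by \cite{Yau1981}*{Theorem B}, the groups $H^{p,q}_{\KR}(M_{\Gamma})$ for $1 \le q \le n-1$ are expressed as local invariants at the singularities of a Stein filling of $M_{\Gamma}$, and since $X_{\Gamma}$ is smooth it suffices to show that $X_{\Gamma} = \ball_{\bbC}^{n+1}/\Gamma$ is Stein. The natural candidate for a $\Gamma$-invariant strictly plurisubharmonic exhaustion is $\rho(z) \coloneqq \cosh d(z, \bbH_{\bbR}^{n+1}) - 1$: it is $\Gamma$-invariant because $\Gamma$ preserves $\bbH_{\bbR}^{n+1}$, it diverges precisely as $z \to \Omega_{\Gamma} = S^{2n+1} \setminus S^{n}$, and because $\bbH_{\bbR}^{n+1}/\Gamma_{0}$ is compact it descends to an exhaustion of $X_{\Gamma}$.

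The main obstacle is verifying that $\rho$ is strictly plurisubharmonic on all of $\ball_{\bbC}^{n+1}$, since the criterion \cite{Dey-Kapovich2020}*{Theorem 1} invoked in \cref{prop:vanishing-KR-cohomology-with-small-delta} does not apply when $\delta_{\Gamma} = n \ge 2$. Near $\bbH_{\bbR}^{n+1}$ strict plurisubharmonicity is standard, as the squared distance to a totally real, totally geodesic submanifold of a \Kahler manifold is strictly plurisubharmonic in a tubular neighborhood. Globally, one can appeal to the theory of adapted complexifications: $\ball_{\bbC}^{n+1}$ is identified with the maximal Grauert tube of $\bbH_{\bbR}^{n+1}$, on which the intrinsic function $\cosh d(\cdot, \bbH_{\bbR}^{n+1})-1$ is globally strictly plurisubharmonic. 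A possible alternative is to build a different $\Gamma$-invariant strictly plurisubharmonic exhaustion via a Poincaré-type series, adapting the argument of Dey-Kapovich and exploiting the fact that here $\Lambda_{\Gamma}$, although of Hausdorff dimension $n$, is a smooth totally real submanifold rather than a genuine fractal. Granting Steinness of $X_{\Gamma}$, Yau's theorem delivers $H^{p,q}_{\KR}(M_{\Gamma}) = 0$ for all $0 \le p \le n+1$ and $1 \le q \le n-1$.
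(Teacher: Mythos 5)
Your construction of $\Gamma$ (image of a torsion-free cocompact lattice of $O(n+1,1)$), the identification $\Lambda_{\Gamma}=S^{n}=S^{2n+1}\cap\bbR^{n+1}$, the verification of convex cocompactness via $C_{\Gamma}\subset\ball_{\bbR}^{n+1}$, the computation $\delta_{\Gamma}=\dim_{H}\Lambda_{\Gamma}=n$, and the final reduction to Yau's Theorem B are exactly the paper's argument. The one step you flag as ``the main obstacle''---Steinness of $X_{\Gamma}$---is, however, not actually established by your sketch, and this is a genuine gap. Strict plurisubharmonicity of $\cosh d(\cdot,\ball_{\bbR}^{n+1})-1$ on \emph{all} of $\ball_{\bbC}^{n+1}$ (equivalently of the invariant function $\abs{1-z\cdot z}/(1-\abs{z}^{2})=\cosh 2d(z,\ball_{\bbR}^{n+1})$, since $d(z,\ball_{\bbR}^{n+1})=\tfrac{1}{2}d(z,\bar z)$) is precisely the nontrivial content here; the tubular-neighborhood fact only gives it near the real form. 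Moreover, the appeal to adapted complexifications is incorrect as stated: the maximal Grauert tube (radius $\pi/2$) of real hyperbolic space is the Akhiezer--Gindikin crown domain, which is biholomorphic to a Lie ball (a bounded symmetric domain of type IV), \emph{not} to the complex ball $\ball_{\bbC}^{n+1}$---already for the hyperbolic plane the maximal tube is a bidisk, and in general the type IV domain has the wrong automorphism group. So the global plurisubharmonicity you need cannot be imported from Grauert tube theory in this way, and your alternative (a Poincar\'e-series construction \`a la Dey--Kapovich) is only a pointer, not an argument; their criterion is unavailable since $\delta_{\Gamma}=n\geq 2$.

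The missing statement is nevertheless true, and it is exactly what the paper cites at this point: Burns--Shnider \cite{Burns-Shnider1976}*{Proposition 6.4} prove that $X_{\Gamma}=\ball_{\bbC}^{n+1}/\Gamma$ is Stein for such $\Gamma$. Replacing your Grauert-tube paragraph by this citation (or by an honest computation of the Levi form of the invariant exhaustion above, which amounts to reproving that result) closes the gap; with that input, the rest of your argument coincides with the paper's proof of the proposition, in parallel with \cref{prop:vanishing-KR-cohomology-with-small-delta}.
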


\begin{proof}
	Similar to the unitary group $U(n + 1, 1)$,
	the orthogonal group $O(n + 1, 1)$ with respect to $\diag(- 1, 1, \dots , 1)$
	acts on the ball
	\begin{equation}
		\ball_{\bbR}^{n + 1}
		\coloneqq \Set{x = (x^{1}, \dots , x^{n + 1}) \in \bbR^{n + 1} | \abs{x}^{2} < 1}
	\end{equation}
	by the fractional linear transformation,
	which preserves the real hyperbolic metric on $\ball_{\bbR}^{n + 1}$.

	Take $\Gamma_{0}$ be a torsion-free discrete subgroup of $O(n + 1, 1)$
	so that $\ball_{\bbR}^{n + 1} / \Gamma_{0}$ is compact.
	Denote by $\Gamma$ the image of $\Gamma_{0}$ under the composition
	of the inclusion $O(n + 1, 1) \to U(n + 1, 1)$ and the projection $U(n + 1, 1) \to PU(n + 1, 1)$.
	This $\Gamma$ is a torsion-free discrete subgroup of $PU(n + 1, 1)$ and
	\begin{equation}
		\Lambda_{\Gamma}
		= \Set{x \in \bbR^{n + 1} | \abs{x}^{2} = 1}.
	\end{equation}
	It follows from the compactness of $\ball_{\bbR}^{n + 1} / \Gamma_{0}$
	that $\Gamma$ is convex cocompact and $\delta_{\Gamma} = \dim_{H} \Lambda_{\Gamma} = n$.
	Moreover,
	$X_{\Gamma}$ is a Stein manifold~\cite{Burns-Shnider1976}*{Proposition 6.4}.
	We obtain from \cite{Yau1981}*{Theorem B} that
	$H^{p, q}_{\KR}(M_{\Gamma}) = 0$ for any $0 \leq p \leq n + 1$ and $1 \leq q \leq n - 1$.
\end{proof}

\section*{Acknowledgements}

The author would like to thank Wei Wang for some helpful comments on \cref{rem:scalar-curvature}.
He is also grateful to the anonymous referee for valuable suggestions
which led to improvements of the revised version of the paper.

\bibliography{my-reference,my-reference-preprint}

\end{document}